\theoremstyle{plain}
\newtheorem{thm}{Theorem}[section]
\newtheorem{lemma}[thm]{Lemma}
\newtheorem{prop}[thm]{Proposition}
\newtheorem{cor}[thm]{Corollary}
\newtheorem*{thmsymp*}{Theorem \ref{thm:gen_cover}'}
\theoremstyle{definition}
\newtheorem{defin}[thm]{Definition}
\newtheorem{rem}[thm]{Remark}
\newtheorem{example}[thm]{Example}
\newtheorem{conj}[thm]{Conjecture}
\newcommand{\R}{{\mathbb{R}}}
\newcommand{\Z}{{\mathbb{Z}}}
\newcommand{\N}{{\mathbb{N}}}
\newcommand{\cD}{{\mathcal{D}}}
\newcommand{\cC}{{\mathcal{C}}}
\newcommand{\cF}{{\mathcal{F}}}
\newcommand{\cG}{{\mathcal{G}}}
\newcommand{\cI}{{\mathcal{I}}}
\newcommand{\cJ}{{\mathcal{J}}}
\newcommand{\cU}{{\mathcal{U}}}
\newcommand{\cV}{{\mathcal{V}}}
\def\id{{1\hskip-2.5pt{\rm l}}}
\newcommand{\supp}{{\it supp\,}}
\definecolor{lev}{rgb}{0.773,0.294,0.549}
\begin{document}

\title{Poisson Brackets of Partitions of Unity on Surfaces}

\author{Lev Buhovsky, Alexander Logunov, Shira Tanny}

\maketitle


\begin{abstract}
        Given an open cover of a closed symplectic manifold, 
        consider all smooth partitions of unity consisting of functions supported in the covering sets.  The Poisson 
        bracket invariant of the cover measures how much the functions from such a partition of unity can become 
        close to being Poisson commuting. We introduce a new approach to this invariant, which enables us to 
        prove the lower bound conjectured by L. Polterovich, in dimension $2$. 
\end{abstract}

\section{Introduction and results.}

Let $(M,\omega)$ be a closed connected symplectic manifold and let $\mathcal U:= \{U_i\}_{i\in I}$ be a finite open cover of $M$ by displaceable\footnote{We say that a subset $S\subset M$ is {\it displaceable} if there exists a Hamiltonian diffeomorphism $\phi:M\rightarrow M$ that displaces its closure, namely $\phi(\bar S)\cap \bar S=\emptyset$.} sets. Any subordinate\footnote{Given an open cover $\mathcal U:= \{U_i\}_{i\in I}$  of $M$, we say that a partition of unity  $\mathcal F =\{f_i\}_{i\in I}$ is subordinate to $\mathcal U$ if $supp(f_i)\subset U_i$ for all $i\in I$.} partition of unity $\mathcal F =\{f_i\}_{i\in I}$ cannot be Poisson commuting, as follows from the nondisplaceable fiber theorem \cite{entov2004quasi}.
Note that the assumption on the displaceability of sets in $\mathcal U$ is crucial - any partition of unity on $S^2\subset \R^3$ that depends only on the height $z$ is Poisson commuting. The study of lower bounds for this non-commutativity was initiated in \cite{entov2006quasi}, where M. Entov, L. Polterovich and F. Zapolsky used symplectic quasi-states to prove that $\max_{i,j} \|\{f_i,f_j\}\|\geq \text{const}/|I|^3$. Here and further on, $\|\cdot\|:C^\infty(M,\R)\rightarrow\R$ stands for the uniform (or the $ L^\infty $) norm, $\|f\|=\max_M |f|$. Below, we present an improvement of this bound for the case where $M$ is a surface, see Corollary~\ref{cor:c0_bound}.

The non-commutativity of partitions of unity subordinate to a cover $\mathcal{U}$ can be also measured by the Poisson bracket invariant, which was introduced by L. Polterovich in \cite{polterovich2012quantum}:
\begin{equation}\label{eq:pb_def}
pb(\mathcal{U}):= \inf_{\mathcal F} \max_{x,y\in[-1,1]^{|I|}} \Big\|\Big\{\sum_{i\in I} x_i f_i,\sum_{j \in I} y_j f_j\Big\}\Big\|,
\end{equation}
where the infimum is taken over all partitions on unity $\mathcal F$ subordinate to $\mathcal U$. In \cite{polterovich2012quantum, polterovich2014symplectic}, Polterovich	explained the relations between this invariant and quantum mechanics and conjectured an optimal lower bound for $pb(\cU)$ in terms of the {\it magnitude of localization} of $\cU$: 

\begin{conj}\label{con:pb}
	Let $(M,\omega)$ be a closed symplectic manifold, and let $\cU = \{U_i\}_{i\in I}$ be an open cover of $M$ by displaceable sets. Then, there exists a constant $C = C(M,\omega) > 0 $ depending only  on the symplectic manifold, such that 
	\begin{equation}
	pb(\cU) \geq \frac{C}{e(\cU)},
	\end{equation}  
	where $e(\cU): = \max_{i\in I} e(U_i)$ and $e(U_i)$ is the displacement energy\footnote{For a displaceable subset $S\subset M$, the {\it displacement energy} of $ S $ 
	is the infimum of a Hofer length $ \ell_{\text{Hof}}(H) = \int_{0}^1 \max_M H(\cdot,t) - \min_M H(\cdot,t) \, dt $, for all time-dependent smooth Hamiltonian functions 
	$ H : M \times [0,1] \rightarrow \mathbb{R} $ such that the time-$1$ map $ \phi : M \rightarrow M $ of the Hamiltonian flow generated by $ H $, displaces 
	the closure of $ S $:  $\phi(\bar S)\cap \bar S=\emptyset$.} of $U_i$.	
\end{conj}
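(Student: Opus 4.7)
The plan is to prove Conjecture~\ref{con:pb} under the hypothesis that $(M,\omega)$ is a closed symplectic surface. The approach exploits two features specific to dimension two. First, the Poisson bracket $\{F,G\}$ of two smooth functions is precisely the Jacobian of the map $(F,G)\colon M\to\R^2$ measured against the area form $\omega$. Second, the displacement energy of a proper open subset of a surface is, up to universal multiplicative constants, its symplectic area. Combining these two with the change of variables formula gives, for any $x,y\in[-1,1]^{|I|}$ and the associated map $\Phi_{x,y}=(F_x,F_y):=(\sum_i x_if_i,\sum_j y_jf_j)\colon M\to[-1,1]^2$, the inequality
\[
\|\{F_x,F_y\}\|\cdot\mathrm{Area}(M) \;\geq\; \int_M|\{F_x,F_y\}|\,\omega \;=\; \int_{\R^2}\#\Phi_{x,y}^{-1}(p)\,dp \;\geq\; \mathrm{Area}\bigl(\Phi_{x,y}(M)\bigr).
\]
Consequently, the conjecture reduces to the following purely geometric statement: for every partition of unity $\mathcal F=\{f_i\}_{i\in I}$ subordinate to $\mathcal U$, there exist $x,y\in[-1,1]^{|I|}$ so that the image of $\Phi_{x,y}$ in $[-1,1]^2$ has area at least $c\cdot\mathrm{Area}(M)/e(\mathcal U)$ for some constant $c=c(M,\omega)>0$.

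The next step is to produce such coefficients $x,y$ directly from the combinatorics of the cover $\mathcal U$. On a surface, $\mathrm{Area}(U_i)\lesssim e(U_i)\leq e(\mathcal U)$, so any displaceable cover has "complexity" at least $\mathrm{Area}(M)/e(\mathcal U)$, in the sense that at least this many pieces are essentially needed to tile $M$. The strategy is to choose $x$ and $y$ as signed combinations associated to a pair of well-chosen partitions of $I$ into a few colour classes, in such a way that each $U_i$ is sent (approximately) to a single region of the target $[-1,1]^2$, while distinct colour patterns along the nerve of $\mathcal U$ produce points in genuinely distinct regions. Averaging over an appropriately distributed family of such colourings should then force $\Phi_{x,y}(M)$ to fill a subset of the plane whose area is comparable to $\mathrm{Area}(M)/e(\mathcal U)$.

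The main obstacle is that the partition of unity $\mathcal F$ is extremely flexible: each $f_i$ can be concentrated on an arbitrarily small portion of $U_i$, so one cannot hope to control $\Phi_{x,y}$ pointwise on $M$. The new ingredient must therefore be an argument that extracts an image-area lower bound for $\Phi_{x,y}$ from nothing but the constraints $\sum_i f_i\equiv 1$ and $\mathrm{supp}(f_i)\subseteq U_i$, without tracking concentration of mass. This is where a dimension-two input is indispensable: in the plane, small image area forces many fibres to coincide, which one can then play off against the requirement that the $f_i$'s collectively separate the covering sets. I expect this dimension-two separation estimate to be the technical heart of the proof, and once it is in place the conclusion $pb(\mathcal U)\geq c/e(\mathcal U)$ is immediate from the area formula displayed above.
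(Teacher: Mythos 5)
Your opening inequality is fine as far as it goes, but the proposed reduction cannot work. You write
\[
\|\{F_x,F_y\}\|\cdot\mathrm{Area}(M) \;\geq\; \int_M|\{F_x,F_y\}|\,\omega \;=\; \int_{\R^2}\#\Phi_{x,y}^{-1}(p)\,dp \;\geq\; \mathrm{Area}\bigl(\Phi_{x,y}(M)\bigr),
\]
and then claim it suffices to find $x,y$ with $\mathrm{Area}(\Phi_{x,y}(M))\geq c\cdot\mathrm{Area}(M)/e(\mathcal U)$. But $\Phi_{x,y}(M)\subset[-1,1]^2$, so $\mathrm{Area}(\Phi_{x,y}(M))\leq 4$ for every $x,y$, whereas $\mathrm{Area}(M)/e(\mathcal U)$ is unbounded as the covering sets shrink. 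For small $e(\mathcal U)$ the target inequality is simply false, so the reduction collapses. The loss occurs precisely in the last inequality, where you discard the multiplicity $\#\Phi_{x,y}^{-1}(p)$: it is the fibre count, not the image area, that can blow up as $e(\mathcal U)\to 0$, and any workable argument has to keep track of it. Moreover, the remaining steps (choosing colourings, a ``dimension-two separation estimate'') are only sketched; you acknowledge yourself that the key lemma is missing, so even setting aside the area cap this is not yet a proof.

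For comparison, the paper avoids the $[-1,1]^2$ cap by never trying to make a single Poisson bracket $\{F_x,F_y\}$ large directly. Instead it first proves a pointwise linear-algebra inequality (Lemma~\ref{lem:pb_vs_sum}, proved in Appendix~\ref{app:lin_exc}) of the form
\[
\max_{x,y\in[-1,1]^N}\Big\{\sum_i x_i f_i,\sum_j y_j f_j\Big\}(p)\;\geq\; c(n)\sum_{i,j}|\{f_i,f_j\}|(p),
\]
which converts the problem into lower-bounding $\max_M\sum_{i,j}|\{f_i,f_j\}|$, a sum of absolute values where no cancellation can occur. It then proves the $L^1$ bound $\int_M\sum_{i,j}|\{f_i,g_j\}|\,\omega\geq\mathrm{area}(M)/(2A)$ (Theorem~\ref{thm:gen_cover}) by the coarea identity (Lemma~\ref{lem:pb_to_intersections}) --- the same identity you invoke --- but applied per pair $(i,j)$, so that the multiplicities (intersections of level sets of $f_i$ with level sets of $g_j$) are counted rather than discarded. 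The actual counting is done via the permutation/averaging device of Lemma~\ref{lem:bound_no_intersections} and the $A$-division intersection Lemma~\ref{lem:divisions_intersection}. This is the ``technical heart'' you were hoping to supply, and it lives on the level of the $\sum_{i,j}|\{f_i,f_j\}|$ quantity, not on the level of the image of a single map $\Phi_{x,y}$.
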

 Polterovich also proved several lower bounds for this invariant, which were then improved and extended by S. Seyfaddini in \cite{seyfaddini2014spectral} as well as by S. Ishikawa in \cite{ishikawa2015spectral}. These lower bounds decay in the {\it degree} of the cover (which was defined in  \cite{polterovich2014symplectic}), and their proofs rely on ``hard" symplectic topology (for example, properties of spectral invariants).  In this paper, we prove Conjecture~\ref{con:pb} in dimension 2 using only elementary arguments.

\begin{rem}\label{rem:energy_vs_area}
	For a closed symplectic surface $(M,\omega)$, a connected subset $S\subset M$ is displaceable if and only if it is contained in an embedded open topological disc $ V \subset M $ with smooth boundary and $ area(V) < \frac{area(M)}{2} $. In this case, the infimum of the area of such a topological disc $ V $ is precisely the displacement energy $ e(S) $. If a subset $ S \subset M $ is not displaceable then we have $ e(S) = +\infty $.
\end{rem}

The following lemma holds for manifolds of general dimension, but we will apply it to closed surfaces.
\begin{lemma}\label{lem:pb_vs_sum}
	Let $(M^{2n},\omega)$ be a closed symplectic manifold of dimension $2n$. Then, there exists a constant $c(n) > 0$ depending only on the dimension, such that for every finite collection of smooth functions $\{f_i\}_{i\in I}$ on $M$,
	\begin{equation*}\label{eq:pb_vs_sum}
	\max_{x,y\in[-1,1]^{|I|}} \Big\|\Big\{\sum_{i\in I} x_i f_i,\sum_{j\in I} y_j f_j\Big\}\Big\|
	\geq c(n)\cdot \max_M \sum_{i,j\in I}|\{f_i,f_j\}|.
	\end{equation*} 
\end{lemma}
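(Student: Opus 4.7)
The plan is to localize the inequality to a single point of $M$ and then reduce the resulting statement to a convex-geometric claim about zonotopes in $\R^{2n}$. First, pick a point $p_0 \in M$ at which the pointwise sum $\sum_{i,j}|\{f_i, f_j\}|$ attains its maximum; since the uniform norm dominates the absolute value at $p_0$, and $\{\sum x_i f_i, \sum y_j f_j\}(p_0) = \sum_{i,j} x_i y_j G_{ij}$ with $G_{ij} := \{f_i, f_j\}(p_0)$, the lemma reduces to the purely linear-algebraic claim
\[
\max_{x, y \in [-1,1]^{|I|}} \Big| \sum_{i,j} x_i y_j G_{ij} \Big| \;\geq\; c(n) \sum_{i,j}|G_{ij}|
\]
for the antisymmetric matrix $G$.

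Next I exploit that $G$ has low rank. In Darboux coordinates at $p_0$ identifying $T_{p_0}M \cong \R^{2n}$ with the standard symplectic form $\omega$, set $v_i := X_{f_i}(p_0) \in \R^{2n}$; then $G_{ij} = \omega(v_i, v_j)$, so $G = V^T J V$ with $V$ the $2n \times |I|$ matrix whose columns are the $v_i$, and in particular $\mathrm{rank}(G) \leq 2n$. Letting $P := V([-1,1]^{|I|}) = \sum_i [-v_i, v_i] \subset \R^{2n}$ be the centrally symmetric zonotope generated by the $v_i$, the claim becomes the symplectic-geometric inequality
\[
\max_{a, b \in P}|\omega(a, b)| \;\geq\; c(n) \sum_{i,j}|\omega(v_i, v_j)|.
\]

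The case $n = 1$ is elementary. The classical formula for the area of a 2D zonotope yields $\mathrm{area}(P) = 2 \sum_{i,j}|\omega(v_i, v_j)|$. On the other hand, for any centrally symmetric convex body $K \subset \R^2$ John's theorem produces an inscribed ellipse of area at least $\mathrm{area}(K)/2$; the orthogonal axes of such an ellipse give $a, b \in K$ with $|\det(a, b)| \geq \mathrm{area}(K)/(2\pi)$. Applied to $K = P$ this settles the case $n = 1$ with an explicit constant.

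For general $n$, the strategy is to split $\omega = \sum_{k=1}^n \omega_k$ along the $n$ symplectic 2-planes of the Darboux chart, let $\pi_k : \R^{2n} \to \R^2$ denote the projections, and apply the 2D result to each zonotope $\pi_k(P)$. Combined with the triangle inequality $|\omega(v_i, v_j)| \leq \sum_k |\omega_k(\pi_k v_i, \pi_k v_j)|$, this yields
\[
\sum_{i,j}|\omega(v_i, v_j)| \;\leq\; \pi \sum_{k=1}^n \max_{a_k, b_k \in \pi_k(P)}|\omega_k(a_k, b_k)|.
\]
The main technical obstacle is then to bound the right-hand side by $\tilde c(n) \max_{a, b \in P}|\omega(a, b)|$: for each $k$ one lifts the per-plane maximizer to a pair $(a^{(k)}, b^{(k)}) \in P \times P$, and then one must combine these $n$ lifts, via sign choices and convex averaging, into a single pair $(a, b) \in P \times P$ for which the destructive interferences among the cross-terms $\omega_\ell(\pi_\ell a^{(k)}, \pi_\ell b^{(k')})$ for $\ell, k, k'$ not all equal are controlled. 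Handling this interference problem, essentially showing that the $n$ independent 2-dimensional symplectic-area bounds cannot all mutually cancel after aggregation, is where the explicit $n$-dependence of the constant $c(n)$ enters.
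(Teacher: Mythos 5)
Your localization to a point $p_0$ and the reformulation in terms of the zonotope $P=\sum_i[-v_i,v_i]$ with $v_i = X_{f_i}(p_0)$ match the paper's reduction (the paper phrases it as Proposition~\ref{pro:linear_ineq} and Corollary~\ref{cor:linear} in Appendix~\ref{app:lin_exc}). Your $n=1$ argument via the zonotope area formula and John's ellipse is correct and gives a clean explicit constant; this is a genuinely different (and arguably more elementary) route than the paper's, which works uniformly in $n$.

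For $n\geq 2$, however, there is a fatal gap, and it is exactly at the step you yourself flag as the ``main technical obstacle.'' The inequality you would need,
\[
\sum_{k=1}^n\max_{a_k,b_k\in\pi_k(P)}|\omega_k(a_k,b_k)| \;\leq\; \tilde c(n)\,\max_{a,b\in P}|\omega(a,b)|,
\]
is false for any constant $\tilde c(n)$. Take $n=2$, $v_1=(1,0,0,1)$, $v_2=(0,1,1,0)$, and $v_3=(0,0,\delta,0)$ with $\delta>0$ small. Then $\omega_0(v_1,v_2)=1\cdot1-1\cdot1=0$, $\omega_0(v_2,v_3)=0$, and $\omega_0(v_1,v_3)=-\delta$, so $\max_{a,b\in P}|\omega_0(a,b)|=2\delta$ (since $\omega_0$ restricted to $\mathrm{span}\{v_1,v_2,v_3\}$ is determined by these pairings). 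Meanwhile $\pi_1(v_1)=(1,0)$, $\pi_1(v_2)=(0,1)$ already give $\max_{\pi_1(P)}|\omega_1|\geq 2$, independently of $\delta$, and similarly for $\pi_2(P)$. Letting $\delta\to 0$ forces $\tilde c(n)\to\infty$. The issue is precisely the ``destructive interference'' you describe: a zonotope can be nearly Lagrangian for $\omega$ yet project onto symplectically spread zonotopes in each coordinate $2$-plane, so the per-plane lower bounds do not aggregate without first normalizing the frame. Sign choices and convex averaging cannot fix this because the total symplectic area available on $P$ can be arbitrarily small relative to the per-plane areas. The paper's proof circumvents this by a preliminary symplectic normalization: replace the $v_i$ by $S_{\min}v_i$ where $S_{\min}\in\operatorname{Sp}(2n)$ minimizes $\sum_i\|Sv_i\|$, and then use a cone-covering argument together with a specific symplectic squeeze $S(av+bJ_0v+w)=\tfrac12 av+2bJ_0v+w$; the minimality of $S_{\min}$ forces the squeeze to fail to reduce the sum of norms, which in turn produces the needed symplectic spread in a coordinate-free way. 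Without such an $\operatorname{Sp}(2n)$-invariant normalization, a decomposition along a \emph{fixed} Darboux frame cannot work.
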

In fact, we prove that a pointwise inequality holds, see Appendix~\ref{app:lin_exc}. 
In Section \ref{S:Main-results} we prove lower bounds for the $L^\infty$ and the $L^1$ norms of the sum $\sum_{i,j\in I}|\{f_i,f_j\}|$ on a closed symplectic surface $(M^2,\omega)$, and use Lemma~\ref{lem:pb_vs_sum} for the case where $n=1$ to conclude that the same holds for $pb(\mathcal U)$ up to a constant. \\

\subsection{Poisson bracket on surfaces.} \label{S:Main-results}

The present subsection contains the main results of this paper (Theorems \ref{thm:gen_cover} and \ref{thm:essential}) concerning symplectic geometry in dimension two, and Sections \ref{sec:essential_sets} and \ref{sec:gen_covers} are devoted for their proofs. The formulations and proofs of the main results  do not assume any knowledge in symplectic geometry, beyond explained in Remark \ref{rem:basic-symp-notions} below.

\begin{rem} \label{rem:basic-symp-notions}
Given a surface $ M $, endowed with an area form $ \omega $ (in that case we say that $ (M,\omega) $ is a symplectic surface), the Poisson bracket of a pair of smooth functions on $ M $, is itself a smooth function on $ M $, which measures how much the differentials of the functions are non-collinear at each point. More precisely, given $ f,g \in C^\infty(M) $, their Poisson bracket $ \{f,g\} \in C^\infty(M) $ is defined by $ df \wedge dg = \{ f, g \} \, \omega $. For example, if $ M = \mathbb{R}^2 $ with coordinates $ (x,y) $, and $ \omega = dx \wedge dy $ is the standard area form, then $ \{ f,g \} = f_x g_y - f_y g_x $ is the determinant of the $2 \times 2 $ matrix whose rows are the gradients of $ f $ and $ g $. In higher dimensions, the Poisson bracket is naturally defined on any symplectic manifold, and we refer the interested reader to \cite{arnold1989classical-mechanics} for details.
\end{rem}

Let $ (M,\omega) $ be a closed connected symplectic surface. Recall that given an open cover $\mathcal U:= \{U_i\}_{i\in I}$ of $M$, we say that a partition of unity  $\mathcal F =\{f_i\}_{i\in I}$ is subordinate to $\mathcal U$ if $supp(f_i)\subset U_i$ for all $i\in I$. 
As before, we denote by $\|\cdot\|:C^\infty(M,\R)\rightarrow\R$ the uniform norm, $\|f\|=\max_M |f|$. Let us pass to our first main result.

\begin{thm}\label{thm:gen_cover}
	Let $ (M,\omega) $ be a closed and connected symplectic surface. Let $\{f_i\}_{i\in I}$,  $\{g_j\}_{j\in J}$ be partitions 
	of unity on $ M $, such that for some real number $ 0 < A < area(M)/2 $, the support of 
	each $ f_i $ lies in some topological disc of area not greater than $ A $, and similarly, the support of each $ g_j $ lies in some topological disc of area not greater than $ A $. Then,
	\begin{equation}\label{eq:gen_cover}
	\sum_{i\in I}\sum_{j\in J} \int_M |df_i \wedge dg_j| = \sum_{i\in I}\sum_{j\in J} \int_M |\{f_i,g_j\}|\ \omega \geq \frac{area(M)}{2A}
	\end{equation}
\end{thm}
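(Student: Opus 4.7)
My plan is to convert the stated inequality into a topological statement about level curves via the area formula, and then exploit the hypothesis that each support lies in a disc of area strictly less than $area(M)/2$.

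The first step is the area formula applied to each smooth map $(f_i,g_j)\colon M\to\R^2$, which gives
\[
\int_M|df_i\wedge dg_j|=\iint_{[0,1]^2}\#\bigl(\{f_i=s\}\cap\{g_j=t\}\bigr)\,ds\,dt.
\]
Summing over $i,j$ and writing
\[
N(s,t):=\sum_{i,j}\#\bigl(\{f_i=s\}\cap\{g_j=t\}\bigr),
\]
the claimed inequality becomes $\iint_{[0,1]^2}N(s,t)\,ds\,dt\ge\tfrac{area(M)}{2A}$, so the task reduces to a good lower bound on the inner integral.

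For generic $(s,t)\in(0,1)^2$ (via Sard), each level set $\{f_i=s\}$ is a disjoint union of smooth Jordan curves contained in the disc $V_i$ of area $\le A$ containing $\operatorname{supp}f_i$, each one bounding a sub-disc of area $\le A$; similarly for $\{g_j=t\}$. Since $A<area(M)/2$, each such Jordan curve has an unambiguous \emph{inside} (the small sub-disc) and \emph{outside}; two such curves whose inside-sub-discs partially overlap (neither contained in the other) must cross in at least two points, by the Jordan curve theorem together with the parity of transverse intersections of smooth curves on a surface. Moreover, when $s<1/|I|$ the open sub-discs $\{f_i>s\}$ cover $M$, because $\sum_if_i\equiv1$ forces $\max_if_i\ge1/|I|$; the analogous statement holds for the $g_j$-sub-discs when $t<1/|J|$.

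The core of the argument is then a pointwise lower bound on $N(s,t)$ valid on a large subset of $(0,1)^2$. The strategy is this: for each $g$-sub-disc $E$ of substantial area, its complement $M\setminus E$ has area $>area(M)/2>A$, so by the covering property it takes many $f$-sub-discs to cover $M\setminus E$; none of these $f$-sub-discs can contain $E$, so each one that meets $E$ but is not itself contained in $E$ partially overlaps $E$ and contributes at least $2$ to the boundary intersection count. A careful double count over all $g$-sub-discs should produce $N(s,t)\ge area(M)/A$ on a set of $(s,t)$ of measure $\ge 1/2$, from which the theorem follows after integration, with the factor $1/2$ accounting either for the measure of the admissible $(s,t)$-region or for the per-pair parity factor of $2$ in the crossing count.

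The main obstacle lies in this last step: controlling the nested-versus-crossing decomposition in the topological count without over- or under-counting. When many $f$-sub-discs and $g$-sub-discs sit inside each other, one must extract a linear-in-area number of partially overlapping pairs from the covering property, which requires handling the nested cases without spoiling the budget. I would calibrate the argument first on the model case of two identical three-disc partitions on $S^2$, where nesting is trivial and the constant $1/(2A)$ can be extracted cleanly, and then generalise with a Morse-theoretic genericity argument to ensure the $f$-sub-discs and $g$-sub-discs are in transverse position throughout.
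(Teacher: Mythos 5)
Your first step is exactly the paper's (Lemma~\ref{lem:pb_to_intersections}): converting $\int_M|df_i\wedge dg_j|$ into an integral over $(s,t)$ of the level-set intersection count. After that the argument diverges, and there is a genuine gap.

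You want a pointwise lower bound on $N(s,t)=\sum_{i,j}\#\bigl(\{f_i=s\}\cap\{g_j=t\}\bigr)$ holding on a set of $(s,t)$ of measure $\geq 1/2$. But the covering property you invoke --- that the sub-discs $\{f_i>s\}$ cover $M$ --- only holds for $s<\min_{x\in M}\max_i f_i(x)$, and in general this can fail as soon as $s\geq 1/|I|$; similarly for $t\geq 1/|J|$. The region of $[0,1]^2$ on which both sub-level families are covers therefore has measure at most $1/(|I|\cdot|J|)$, which is arbitrarily small. Outside that region, for a fixed $(s,t)$ the sub-discs leave uncovered gaps, and the ``many $f$-sub-discs must cover $M\setminus E$'' step has no footing; a bound $N(s,t)\geq area(M)/A$ on a set of measure $1/2$ is not available. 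The nesting issue you flag is real too, but the more basic obstruction is that a single level per function simply does not produce a cover.

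The paper's resolution is not a sharper pointwise estimate but a different reduction: discretize each $f_i$ into $\approx L$ levels $s_{i,k}\in[(k-1)/L,k/L]$ simultaneously, so that the family $\{U^{\mathbf{s}}_{i,k}=\{f_i>s_{i,k}\}\}_{i,k}$ covers every point of $M$ at least $L-|I|$ times (because $\sum_i\lfloor Lf_i(x)\rfloor\geq L-|I|$); likewise for the $g_j$. With an $L$-fold cover on each side, Lemma~\ref{lem:bound_no_intersections} gives a bound $\geq L^2\cdot area(M)/(2A)$ on the total number of boundary intersections. That lemma is itself proved by a separate idea that does not appear in your sketch: for each permutation $\alpha$ of the index set one forms an $A$-division $\Gamma_\alpha$ (a union of pieces of boundaries whose faces are all small); Lemma~\ref{lem:divisions_intersection} shows two such $A$-divisions must cross at least $area(M)/(2A)$ times, and averaging over random permutations converts the $L$-fold covering into an $L^2$-factor gain. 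Averaging the resulting inequality over the level thresholds $(\mathbf{s},\mathbf{t})$, applying Lemma~\ref{lem:pb_to_intersections} to each $(i,k,j,\ell)$-cell, and letting $L\to\infty$ recovers the constant $area(M)/(2A)$. If you want to pursue your own route, you would need to find a replacement for the $L$-level discretization and the permutation average; as written, the proposal cannot close.
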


\noindent Our second main result is applicable only to a certain class of covers.

\begin{defin}
	Given an open cover  $\mathcal{U} = \{U_i\}_{i\in I}$ of $M$, we say that a set  $U_\ell\in\mathcal U$ is {\it essential} if $\mathcal U\setminus\{U_\ell\}$ is not a cover, that is, $\cup_{i\neq \ell} U_i\neq M$.
	We denote by $I_{ess}(\mathcal U)\subset I$ the subset of indices corresponding to essential sets in $\mathcal U$.
\end{defin}

\begin{thm}\label{thm:essential}
         Let $ (M,\omega) $ be a closed and connected symplectic surface. Let $\mathcal U:= \{U_i\}_{i\in I}$ be an open cover of $M$ by topological discs of area 
	less than $ area(M)/2 $, and let $\mathcal F =\{f_i\}_{i\in I}$ be any partition of unity subordinate to $ \cU $. Then,
		\begin{eqnarray}
		\int_M \sum_{i,j\in I}  |\{f_i,f_j\}|\ \omega&\geq& |I_{ess}(\mathcal U)|, \label{eq:ess_bnd1}\\
		 \max_M\sum_{i,j\in I} |\{f_i, f_j\}| &\geq& \frac{1}{\min_{\ell\in I_{ess}(\mathcal U)} area(U_\ell)}, \label{eq:ess_bnd2}
		 \end{eqnarray}
		 where we set the minimum of an empty set to be infinity.

\end{thm}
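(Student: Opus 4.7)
The plan is to reduce both inequalities to a single localized estimate: for every essential index $\ell \in I_{ess}(\mathcal U)$, I will prove
\begin{equation*}
\int_{U_\ell} \sum_{j \in I} |\{f_\ell, f_j\}|\, \omega \;\geq\; 1.
\end{equation*}
Granting this, inequality \eqref{eq:ess_bnd1} follows by summation over $\ell \in I_{ess}(\mathcal U)$, using the pointwise bound $\sum_{i,j\in I} |\{f_i,f_j\}| \geq \sum_{\ell \in I_{ess}} \sum_{j\in I} |\{f_\ell,f_j\}|$. For \eqref{eq:ess_bnd2}, since each bracket $\{f_\ell, f_j\}$ is supported in $\overline{U_\ell}$,
\begin{equation*}
\max_M \sum_{i,j} |\{f_i,f_j\}| \;\geq\; \max_{U_\ell} \sum_{j} |\{f_\ell,f_j\}| \;\geq\; \frac{1}{area(U_\ell)} \int_{U_\ell} \sum_{j} |\{f_\ell,f_j\}|\,\omega \;\geq\; \frac{1}{area(U_\ell)},
\end{equation*}
and minimizing over essential $\ell$ yields \eqref{eq:ess_bnd2}.

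To prove the localized estimate, I would apply the two-dimensional coarea formula to $f_\ell$. Pick $p_\ell \in U_\ell$ witnessing essentiality, so $p_\ell \notin U_i$ for all $i\neq\ell$ and hence $f_\ell(p_\ell)=1$. Since $|\{f_\ell,f_j\}|\,\omega = |df_\ell \wedge df_j|$, the coarea formula gives
\begin{equation*}
\int_M |\{f_\ell, f_j\}|\,\omega \;=\; \int_0^1 \mathrm{TV}\bigl(f_j|_{f_\ell^{-1}(t)}\bigr)\,dt,
\end{equation*}
where $\mathrm{TV}$ denotes total variation along the level set. For each regular value $t\in(0,1)$ of $f_\ell$, let $\Omega_t\subset U_\ell$ be the connected component of $\{f_\ell>t\}$ containing $p_\ell$, and let $C_t$ be its outer boundary component: a smooth simple closed curve in $U_\ell$ bounding a topological disc $D_t\subset U_\ell$ with $p_\ell\in D_t$.

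The crux of the argument is the topological claim that $C_t\not\subset U_j$ for every $j\neq\ell$; this is the only step where both hypotheses on the cover — topological discs, each of area less than $area(M)/2$ — enter, and it is the part I expect to require the most care. If $C_t\subset U_j$, the Jordan curve theorem inside the topological disc $U_j$ presents $C_t$ as the boundary of a sub-disc $D_j\subset U_j$, which must coincide with one of the two components $D_t$ and $M\setminus\overline{D_t}$ of $M\setminus C_t$. The first case gives $p_\ell \in D_j\subset U_j$, contradicting the choice of $p_\ell$; the second gives $area(U_j)\geq area(M)-area(U_\ell) > area(M)/2$, contradicting $area(U_j) < area(M)/2$. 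Once $C_t\not\subset U_j$ is known, $f_j$ must vanish somewhere on $C_t$, so $\mathrm{TV}(f_j|_{C_t}) \geq 2 \max_{C_t} f_j$. Summing over $j$ and using $\sum_{j\neq\ell} f_j\equiv 1-t$ on $C_t$ yields $\sum_j \mathrm{TV}(f_j|_{C_t}) \geq 2(1-t)$, whose integral over $t\in(0,1)$ equals $1$. The remaining ingredients — reducing to regular values via Sard's theorem, and the trivial observation that additional components of $f_\ell^{-1}(t)$ only strengthen the total variation estimate — are routine.
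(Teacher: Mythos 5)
Your proof is correct and follows essentially the same strategy as the paper's: reduce to the localized bound for each essential $\ell$, take the witness point $p_\ell$ with $f_\ell(p_\ell)=1$, and show via a topological--area dichotomy that the outer boundary curve $C_t$ of the super-level component containing $p_\ell$ must leave every other covering disc. The only differences are cosmetic: you integrate via the coarea formula and total variation along $C_t$, while the paper organizes the same estimate through Lemma~\ref{lem:pb_to_intersections} (counting level-set intersections $\#(f_\ell^{-1}(t)\cap f_j^{-1}(s))$, which equals the total variation by the Banach indicatrix identity), and correspondingly the paper constructs a second enclosing disc $\tilde D_j(t)\subset U_j$ bounded by a level curve of $f_j$ and shows $C_t$ meets its boundary at least twice, whereas you show directly that $C_t\not\subset U_j$, which is a slightly cleaner way to reach the same conclusion.
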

\begin{rem}\label{rem:essential}\
	\begin{itemize}
		\item Applying Lemma~\ref{lem:pb_vs_sum} to the lower bounds (\ref{eq:ess_bnd1}), (\ref{eq:ess_bnd2}), we get corresponding lower 
		bounds for the Poisson bracket invariant $pb(\cU)\,$:
		\begin{eqnarray}
		 pb(\cU) &\geq& \frac{c \cdot |I_{ess}(\mathcal U)|}{area(M)}, \label{eq:ess_bnd-pb1}\\
		 pb(\cU) &\geq& \frac{c}{\min_{\ell\in I_{ess}(\mathcal U)} area(U_\ell)}, \label{eq:ess_bnd-pb2}
		 \end{eqnarray} 
		 for an absolute constant $ c>0 $.
		\item  If $\mathcal U$ is a minimal cover,  every set is essential and thus $I_{ess}(\mathcal U) =I$. In this case Theorem~\ref{thm:essential} implies 
		that $\int_M \sum_{i,j\in I} |\{f_i,f_j\}|\ \omega\geq {|I|}$, and $\max_M\sum_{i,j\in I} |\{f_i, f_j\}| \geq 1/(\min_{i\in I} area(U_i))$.  
		\item When the cover $\mathcal U$ has no essential sets, $I_{ess}(\mathcal U)=\emptyset$ and Theorem~\ref{thm:essential} gives a trivial lower bound for sum of Poisson brackets. \\
	\end{itemize}
\end{rem}

Theorem \ref{thm:gen_cover} can be reformulated in terms of a cover (whereas now, the cover can be general, i.e. it does not require to admit essential sets or 
to consist only of topological discs):

\begin{thmsymp*} 
	Let $ (M,\omega) $ be a closed and connected symplectic surface. Let $\mathcal U=\{U_i\}_{i\in I}$, $\mathcal V=\{V_j\}_{j\in J}$ be {finite} 
	open covers of $M$, and let $\{f_i\}_{i\in I}$,  $\{g_j\}_{j\in J}$ be partitions of unity subordinate to $\mathcal U$, $\mathcal V$ correspondingly. Then,
	\begin{equation}\label{eq:gen_cover-reformulation}
	\int_M \sum_{i\in I}\sum_{j\in J} |\{f_i,g_j\}|\ \omega \geq \frac{area(M)}{2\cdot \max(e(\mathcal U), e(\mathcal V))}
	\end{equation}
\end{thmsymp*}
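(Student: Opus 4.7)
The plan is to deduce the reformulation from Theorem~\ref{thm:gen_cover} by refining each given partition of unity so that every piece is supported in a single topological disc of small area. I may assume $e(\cU), e(\cV) < \infty$, since otherwise the right-hand side of~\eqref{eq:gen_cover-reformulation} vanishes and there is nothing to prove; by Remark~\ref{rem:energy_vs_area} this is equivalent to $e(\cU), e(\cV) < area(M)/2$. Fix $\epsilon > 0$ small enough that $A := \max(e(\cU), e(\cV)) + \epsilon < area(M)/2$.

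For each $i \in I$, let $\{U_i^k\}_k$ be the connected components of $U_i$, and define $f_{i,k}$ to equal $f_i$ on $U_i^k$ and zero elsewhere. Since $\supp(f_i) \subset U_i$ is compact and $\partial U_i^k \subset \partial U_i$, the function $f_i$ vanishes in a neighborhood of $\partial U_i^k$; hence $f_{i,k}$ is smooth and $\supp(f_{i,k}) \subset U_i^k$. Each $U_i^k$ is connected and displaceable with $e(U_i^k) \leq e(U_i) \leq e(\cU)$, so by Remark~\ref{rem:energy_vs_area} it is contained in some topological disc of area at most $e(\cU) + \epsilon \leq A$. Doing the analogous decomposition on the $g_j$'s yields $\{g_{j,\ell}\}$ with each support contained in a disc of area at most $e(\cV) + \epsilon \leq A$. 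The collections $\{f_{i,k}\}$ and $\{g_{j,\ell}\}$ are partitions of unity satisfying the hypothesis of Theorem~\ref{thm:gen_cover} with this value of $A$.

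The key observation is that this refinement preserves the pointwise sum of absolute values of brackets. Since the sets $\{U_i^k\}_k$ are pairwise disjoint, at every $x \in M$ at most one $k$ contributes a nonzero value to $f_i = \sum_k f_{i,k}$ or to $df_i = \sum_k df_{i,k}$; the analogous statement holds for $g_j$. Therefore the bilinear expansion $\{f_i, g_j\}(x) = \sum_{k,\ell}\{f_{i,k}, g_{j,\ell}\}(x)$ has at most one nonzero summand at each point, so
\begin{equation*}
\sum_{i,j} |\{f_i, g_j\}(x)| = \sum_{i,k,j,\ell} |\{f_{i,k}, g_{j,\ell}\}(x)|.
\end{equation*}
Applying Theorem~\ref{thm:gen_cover} to the refined partitions bounds the integral of the right-hand side below by $area(M)/(2A)$, and letting $\epsilon \to 0$ yields~\eqref{eq:gen_cover-reformulation}.

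The only mild technical point is the smoothness of $f_{i,k}$ across $\partial U_i^k$, which is handled by the positive distance between the compact set $\supp(f_i)$ and the closed set $\partial U_i$; beyond that the argument is purely bookkeeping, since the reformulation differs from Theorem~\ref{thm:gen_cover} only in replacing ``support in a disc of small area'' by the weaker condition ``support in a displaceable set'', and connected components bridge the two.
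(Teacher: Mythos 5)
Your proof is correct and takes essentially the same route as the paper's own argument in Remark~\ref{rem:equivalence}: both pass to connected components to bridge ``support in a disc of area at most $A$'' with ``displacement energy at most $A$'' via Remark~\ref{rem:energy_vs_area}, and both rely on the observation that the pointwise sum $\sum_{i,j}|\{f_i,g_j\}|$ is unchanged under the componentwise refinement. You are a bit more explicit than the paper about the $\epsilon$-approximation and limit (needed because the displacement energy is an infimum, not necessarily attained), which is a welcome detail.
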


Here $e(\cU) = \max_{i\in I} e(U_i)$ and $e(U_i)$ is the displacement energy\footnote{See Remark \ref{rem:energy_vs_area} regarding the notion of the displacement energy in dimension $ 2 $.} of $U_i$ (resp., $e(\cV) = \max_{j\in I} e(V_j)$ and $e(V_j)$ is the displacement energy of $V_j$). See Remark \ref{rem:equivalence} for an explanation of equivalence of Theorems \ref{thm:gen_cover} and \ref{thm:gen_cover}'.

Applying the theorem for $\cU=\cV$ and $\{f_i\} = \{g_j\}$, and using Lemma~\ref{lem:pb_vs_sum}, we obtain the affirmative answer to Conjecture~\ref{con:pb} in dimension $ 2 $, 
as a corollary:
\begin{cor}\label{cor:pb_bound_gen}
	Let $ (M,\omega) $ be a closed and connected symplectic surface. Let $\cU=\{U_i\}_{i\in I}$ be an open 
	displaceable cover of $M$, then for an absolute constant $c>0$ we have
	\begin{equation*}\label{eq:pb_bound_gen}
	pb(\cU)\geq \frac{c}{e(\mathcal U)}.
	\end{equation*}
\end{cor}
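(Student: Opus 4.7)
The plan is to derive the corollary as an essentially immediate consequence of Theorem~\ref{thm:gen_cover}' and Lemma~\ref{lem:pb_vs_sum}, by specializing to the case where both covers and both partitions of unity coincide. Let $\cU = \{U_i\}_{i \in I}$ be a displaceable open cover, and fix an arbitrary partition of unity $\mathcal{F} = \{f_i\}_{i \in I}$ subordinate to $\cU$.

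First, I would apply Theorem~\ref{thm:gen_cover}' with $\cV = \cU$ and $\{g_j\} = \{f_i\}$. This yields the integral bound
\[\int_M \sum_{i,j \in I} |\{f_i, f_j\}|\, \omega \geq \frac{\text{area}(M)}{2 e(\cU)}.\]
Next, I would convert this $L^1$ estimate into a pointwise one by the trivial observation that the integral of a nonnegative continuous function on $M$ is at most $\text{area}(M)$ times its maximum. Dividing by $\text{area}(M)$ gives
\[\max_M \sum_{i,j \in I} |\{f_i, f_j\}| \geq \frac{1}{2 e(\cU)}.\]

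Then I would invoke Lemma~\ref{lem:pb_vs_sum} with $n = 1$, which produces a constant $c(1) > 0$ such that
\[\max_{x,y \in [-1,1]^{|I|}} \Bigl\| \Bigl\{ \sum_{i} x_i f_i, \sum_{j} y_j f_j \Bigr\} \Bigr\| \;\geq\; c(1) \cdot \max_M \sum_{i,j \in I} |\{f_i, f_j\}| \;\geq\; \frac{c(1)}{2 e(\cU)}.\]
Since the partition $\mathcal{F}$ was arbitrary, taking the infimum over all such partitions (which is exactly the definition \eqref{eq:pb_def} of $pb(\cU)$) yields the bound $pb(\cU) \geq c/e(\cU)$ with $c = c(1)/2$.

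The one mild subtlety to address is that Theorem~\ref{thm:gen_cover}' is formulated in terms of displacement energies, whereas Theorem~\ref{thm:gen_cover} (to which it is equivalent) uses a strict upper bound $A < \text{area}(M)/2$ on the areas of topological discs containing the supports. By Remark~\ref{rem:energy_vs_area}, since each $U_i$ is displaceable we may for any $\varepsilon > 0$ enclose $\overline{U_i} \supset \supp(f_i)$ in a topological disc of area less than $e(U_i) + \varepsilon \leq e(\cU) + \varepsilon < \text{area}(M)/2$ (for $\varepsilon$ small enough), so Theorem~\ref{thm:gen_cover} applies with $A = e(\cU) + \varepsilon$, and letting $\varepsilon \to 0$ recovers the bound above. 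There is no real obstacle here — the content of the corollary is entirely contained in Theorem~\ref{thm:gen_cover}' and Lemma~\ref{lem:pb_vs_sum}; it only remains to assemble them and absorb the equivalence between displacement energy and infimal disc area on a surface.
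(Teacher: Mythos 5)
Your proof is correct and follows exactly the route the paper intends: apply Theorem~\ref{thm:gen_cover}' with $\cU = \cV$, pass from the $L^1$ bound to the $L^\infty$ bound by dividing by $\mathrm{area}(M)$, and then invoke Lemma~\ref{lem:pb_vs_sum} with $n=1$. The $\varepsilon$-argument you add to reconcile displacement energy with infimal disc area is a reasonable way to spell out the equivalence the paper delegates to Remark~\ref{rem:energy_vs_area} and Remark~\ref{rem:equivalence}.
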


\begin{rem}
The bound in Corollary \ref{cor:pb_bound_gen}, and bounds (\ref{eq:ess_bnd-pb1}) and (\ref{eq:ess_bnd-pb2}) in Remark \ref{rem:essential}, are sharp in the following sense: on every closed symplectic surface $(M,\omega)$, one can construct a sequence of open displaceable covers $\{\mathcal U_k\}_{k\in\N}$, such that $|I_{ess}(\mathcal U_k)| \approx k$, $\min_{i\in I} area(U_i)\approx \max_{i\in I} area(U_i)\approx1/k$ and  $pb(\mathcal U_k)\approx k$. See {Example~\ref{exa:essential_sharp}} for details.
\end{rem}

The following definition of a degree of a cover is slightly different than the one present by Polterovich in \cite{polterovich2014symplectic}. In fact, the degree below is not larger, and therefore lower bounds with respect to it hold also for the standard definition.
\begin{defin}
	Given a cover $\cU=\{U_i\}_{i\in I}$ of $M$, we define its {\it degree} to be
	\begin{equation*}
	d:=\max_{x\in M} \#\{i\in I:\ x\in U_i\}.
	\end{equation*}
\end{defin}

\begin{cor} \label{cor:c0_bound}
	Let $ (M,\omega) $ be a closed and connected symplectic surface. Let $\cU=\{U_i\}_{i\in I}$ be open displaceable 
	cover of $M$ and let  $\mathcal F =\{f_i\}_{i\in I}$ be a subordinate partition of unity. Then,
	\begin{equation}\label{eq:c0_bound_d}
	\max_{i,j\in  I} \|\{f_i,f_j\}\| \geq \frac{1}{2d^2\cdot e(\cU)},
	\end{equation}
	where $d$ is the degree of the cover $\mathcal{U}$.
\end{cor}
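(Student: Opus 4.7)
The plan is to deduce the pointwise bound from the integral bound supplied by the reformulated Theorem \ref{thm:gen_cover}', taking the degree of the cover as the bridge between the two.

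First, I would apply Theorem \ref{thm:gen_cover}' in the special case $\mathcal{V} = \mathcal{U}$ and $\{g_j\} = \{f_i\}$. Since by Remark \ref{rem:energy_vs_area} the displacement energy of each $U_i$ equals the infimal area of an enclosing topological disc with area $<\mathrm{area}(M)/2$, one can freely replace the discs in the hypothesis of Theorem \ref{thm:gen_cover} by sets of displacement energy at most $e(\mathcal{U})$, yielding
\begin{equation*}
\int_M \sum_{i,j\in I} |\{f_i,f_j\}|\,\omega \;\geq\; \frac{\mathrm{area}(M)}{2\,e(\mathcal{U})}.
\end{equation*}

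Next, I would exploit the degree hypothesis to pass from this integral estimate to a sup-norm estimate. At an arbitrary point $x\in M$, the set $\{i\in I : x\in U_i\}$ has cardinality at most $d$, and since $\mathrm{supp}(f_i)\subset U_i$, the function $f_i$ (and hence its differential) vanishes on a neighbourhood of $x$ whenever $x\notin U_i$. Consequently, among the $|I|^2$ terms in $\sum_{i,j}|\{f_i,f_j\}|(x)$, at most $d^2$ are non-zero, so
\begin{equation*}
\sum_{i,j\in I}|\{f_i,f_j\}|(x) \;\leq\; d^2 \max_{i,j\in I}\|\{f_i,f_j\}\|.
\end{equation*}

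Integrating this pointwise inequality over $M$ and combining with the lower bound obtained from Theorem \ref{thm:gen_cover}' gives
\begin{equation*}
d^2 \cdot \mathrm{area}(M) \cdot \max_{i,j\in I}\|\{f_i,f_j\}\| \;\geq\; \frac{\mathrm{area}(M)}{2\,e(\mathcal{U})},
\end{equation*}
from which \eqref{eq:c0_bound_d} follows after dividing through by $d^2\,\mathrm{area}(M)$. There is no serious obstacle here: once Theorem \ref{thm:gen_cover}' is in hand, the corollary is essentially a bookkeeping step, with the only subtlety being the routine identification of the degree as the maximum number of simultaneously non-vanishing brackets at a point.
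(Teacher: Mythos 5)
Your proof is correct and follows essentially the same route as the paper: apply Theorem \ref{thm:gen_cover}' with $\cU=\cV$ and $\{f_i\}=\{g_j\}$ to get the $L^1$ lower bound $\int_M\sum_{i,j}|\{f_i,f_j\}|\,\omega\geq \mathrm{area}(M)/(2e(\cU))$, observe that at each point at most $d^2$ of the brackets can be non-vanishing since $\{f_i,f_j\}$ is supported in $U_i\cap U_j$, and integrate the resulting pointwise bound to compare with the $L^\infty$ norm. The only cosmetic difference is that the paper invokes Theorem \ref{thm:gen_cover}' directly (which already speaks in terms of displacement energy), so the detour through Remark \ref{rem:energy_vs_area} in your first step is unnecessary.
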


\begin{rem} 
	The dependence on $d$ in the bound presented in Corollary~\ref{cor:c0_bound} is optimal. To see this, take any open displaceable cover  $\mathcal{U}=\{U_i\}_{i\in I}$ of $M$ and a subordinate partition of unity $\mathcal{F} = \{f_i\}_{i\in I}$, and denote 
	\begin{equation*}
	b(\mathcal{F}):= \max_{i,j\in I} \|\{f_i,f_j\}\|.
	\end{equation*} 
	We have $b(\mathcal F)>0$ (by the nondisplaceable fiber theorem \cite{entov2004quasi}, or by Corollary~\ref{cor:c0_bound}). For every $m\in\N$ let $\mathcal{U}^m:=\{U_i,\dots,U_i\}_{i\in I}$ be the cover obtained by taking $m$ copies of each set in $\mathcal{U}$ (i.e., $\mathcal{U}^m$ contains $|I|\cdot m$ sets and is of degree $d\cdot m$, where $d$ is the degree of $\mathcal U$). Consider the subordinate partition $\mathcal{F}^m:=\{\frac{1}{m}f_i,\dots,\frac{1}{m}f_i\}_{i\in I}$. Then  
	\begin{equation*}
	b(\mathcal{F}^m) = \max_{i,j\in I} \|\{\frac{1}{m}f_i,\frac{1}{m}f_j\}\| = \frac{1}{m^2} b(\mathcal{F})
	\end{equation*}
	decays quadratically in the degree of the cover. 
\end{rem}

\subsection{Bounds in higher dimensions.}
From Corollary~\ref{cor:pb_bound_gen} one can conclude that when the sets in $\mathcal{U}$ are small, $pb(\mathcal U)$ must be large. The following proposition was explained to us by Leonid Polterovich and shows that this is true in higher dimensions as well. 

\begin{prop}\label{pro:pb_c0}
	Let $(M,\omega)$ be any closed symplectic manifold of dimension $2n$ and let $\rho$ be any Riemannian metric on $M$. For any $\epsilon>0$, let $\mathcal{U}^\epsilon$ be a finite cover of $M$ by open subsets of diameter at most $\epsilon$ (with respect to the metric $\rho$). Then, 
	\begin{equation}\label{eq:pb_c0}
	pb(\mathcal{U}^\epsilon) \underset{\epsilon\rightarrow 0}{\longrightarrow} \infty.
	\end{equation}
\end{prop}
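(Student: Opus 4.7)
The plan is to argue by contradiction, reducing the statement to the $C^0$-rigidity (lower semicontinuity) of the uniform norm of the Poisson bracket on pairs of smooth functions. Suppose, for contradiction, that there exist a constant $K<\infty$ and a sequence $\epsilon_k\to 0$ with $pb(\mathcal{U}^{\epsilon_k})\le K$ for every $k$. By definition of $pb$, for each $k$ I may pick a subordinate partition of unity $\mathcal F_k=\{f_i^k\}_{i\in I_k}$ such that
\[
\max_{x,y\in[-1,1]^{|I_k|}}\Big\|\Big\{\sum_{i\in I_k}x_if_i^k,\ \sum_{j\in I_k}y_jf_j^k\Big\}\Big\|\le K+1.
\]

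The next step is to construct, for a suitable pair of smooth test functions, admissible linear combinations of the $f_i^k$ that converge to them uniformly. I choose, once and for all, smooth $F,G\colon M\to[-1,1]$ with $\|\{F,G\}\|>K+1$; such a pair exists because inside any Darboux chart I may take appropriately cut-off, rescaled trigonometric functions of two conjugate coordinates and make the bracket arbitrarily large while keeping the sup-norms at most $1$. For each $k$ and each $i\in I_k$, I pick a point $p_i^k\in\mathrm{supp}(f_i^k)\subset U_i^k$ and set
\[
F_k:=\sum_{i\in I_k}F(p_i^k)f_i^k,\qquad G_k:=\sum_{i\in I_k}G(p_i^k)f_i^k.
\]
Since $|F(p_i^k)|,|G(p_i^k)|\le 1$, the coefficient vectors lie in $[-1,1]^{|I_k|}$, and the displayed bound forces $\|\{F_k,G_k\}\|\le K+1$. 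A direct computation using $\sum_i f_i^k\equiv 1$ and the $\rho$-Lipschitz constant $L_F$ of $F$ gives, for every $x\in M$,
\[
|F_k(x)-F(x)|=\Big|\sum_{i\in I_k}(F(p_i^k)-F(x))f_i^k(x)\Big|\le L_F\epsilon_k,
\]
since only indices $i$ with $x\in U_i^k$ contribute and then both $p_i^k$ and $x$ lie in a set of $\rho$-diameter at most $\epsilon_k$. Hence $F_k\to F$ and $G_k\to G$ uniformly as $k\to\infty$.

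The hardest step, and the real content of the argument, is the concluding appeal to the $C^0$-lower-semicontinuity of the uniform norm of the Poisson bracket on smooth functions: whenever smooth $F_k,G_k,F,G$ on a closed symplectic manifold satisfy $F_k\to F$ and $G_k\to G$ uniformly, one has $\|\{F,G\}\|\le\liminf_k\|\{F_k,G_k\}\|$. This is the rigidity theorem of Cardin--Viterbo and Entov--Polterovich, with subsequent refinements by Buhovsky. Once invoked in our situation, it yields $\|\{F,G\}\|\le K+1$, contradicting the choice of $F,G$ and thereby forcing $pb(\mathcal{U}^\epsilon)\to\infty$ as $\epsilon\to 0$. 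Everything else in the proof---the coefficient extraction $x_i=F(p_i^k)$ and the Lipschitz approximation---is elementary; it is $C^0$-rigidity that does the real work.
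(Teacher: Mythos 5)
Your proof is correct and follows essentially the same strategy as the paper: fix a pair of functions with large Poisson bracket, approximate them uniformly by admissible linear combinations $\sum_i x_i f_i$ with coefficients given by sampling the target functions at points of the cover, and then invoke $C^0$-rigidity of the Poisson bracket. The only cosmetic difference is that you frame the argument as a proof by contradiction, whereas the paper argues directly by showing $pb(\mathcal U^\epsilon)\geq R/2$ for any prescribed $R$; these are logically equivalent.
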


One should expect the rate of convergence in Proposition~\ref{pro:pb_c0} to be quadratic in $1/\epsilon$. This is due to the fact that the Poisson bracket is homogeneous of degree 2 with respect to composition with homothetic transformations of $\R^{2n}$: Given smooth functions $g,h:\R^{2n}\rightarrow\R$, and a homothetic transformation $\psi_c:\R^{2n}\rightarrow\R^{2n}$, $\psi_c(x)=c\cdot x$ for some $c>0$, 
\begin{equation*}
\{g\circ\psi_c, h\circ\psi_c\}(x) =c^2\{g,h\}(cx).
\end{equation*} 
The next theorem shows that this is indeed the case.
\begin{thm}\label{thm:rate}
	Consider the setting of Proposition~\ref{pro:pb_c0} and assume in addition that $\rho$ is compatible with $\omega$. Then, there exists a constant $c=c(n)>0$ depending only on the dimension, and a constant  $\delta = \delta(M,\omega,\rho)>0$, depending on the symplectic manifold $(M,\omega)$ and the metric $\rho$, such that for every $\epsilon\leq \delta$,
	\begin{equation}
	pb(\mathcal U^\epsilon)\geq \frac{c}{\epsilon^2}.
	\end{equation}
\end{thm}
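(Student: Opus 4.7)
My plan is to follow the homogeneity hint: reduce to a unit-scale problem in Euclidean space via a Darboux chart plus a homothety, and then derive the unit-scale bound from the two-dimensional Corollary~\ref{cor:pb_bound_gen} by restricting to a symplectically embedded $2$-disc.

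By Lemma~\ref{lem:pb_vs_sum} it is enough to find constants $c_1=c_1(n)>0$ and $\delta=\delta(M,\omega,\rho)>0$ such that
$$\max_M\sum_{i,j\in I}|\{f_i,f_j\}|\geq\frac{c_1}{\epsilon^2}$$
for every partition of unity $\{f_i\}$ subordinate to $\mathcal U^\epsilon$ with $\epsilon\leq\delta$. Since $\rho$ is $\omega$-compatible, at each $p\in M$ I would pick a Darboux chart $\phi_p\fcolon W_p\to V_p\subset\R^{2n}$ whose differential at $p$ identifies $(T_pM,\omega_p,\rho_p)$ with the standard $(\R^{2n},\omega_0,\text{eucl})$; by continuity and compactness of $M$ there is $r_0=r_0(M,\omega,\rho)>0$ so that every such $\phi_p$ is $2$-bi-Lipschitz on $B_\rho(p,r_0)$ with $\phi_p(B_\rho(p,r_0))\supset B_{\text{eucl}}(0,r_0/2)$; set $\delta:=r_0/100$. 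For $\epsilon\leq\delta$, I would pull back a neighbourhood of some $p\in M$ to the chart and apply the homothety $\psi_\epsilon(x)=\epsilon x$: the rescaled functions $\hat f_i:=f_i\circ\phi_p^{-1}\circ\psi_\epsilon$ live on the Euclidean ball $B_{\text{eucl}}(0,r_0/(2\epsilon))$, each is supported in a Euclidean set of diameter $\leq 2$, and the homogeneity formula recalled in the paper gives $\{\hat f_i,\hat f_j\}(x)=\epsilon^2\{f_i,f_j\}_M(\phi_p^{-1}(\epsilon x))$. This reduces the theorem to the following unit-scale statement: for any partition of unity on a Euclidean symplectic ball $B_{\text{eucl}}(0,R)\subset(\R^{2n},\omega_0)$ of large radius $R$ by functions with Euclidean support diameter $\leq 2$, one has $\max\sum_{i,j}|\{\hat f_i,\hat f_j\}|\geq c_2(n)$.

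For the unit-scale statement I would pick a symplectic $2$-disc $\Sigma\subset B_{\text{eucl}}(0,R)$ in the $(q_1,p_1)$-plane of some fixed area $A\gg\pi$; the restrictions $\{\hat f_i|_\Sigma\}$ form a partition of unity of $\Sigma$ by functions supported in $2$-discs of area $\leq\pi\ll A/2$. I would close $\Sigma$ up into a symplectic $2$-sphere $\bar\Sigma$ of area $A+\eta$, extend the $\hat f_i|_\Sigma$ by zero, and add a single correction $g:=1-\sum_i \hat f_i|_{\bar\Sigma}$ supported in the complementary disc $\bar\Sigma\setminus\Sigma$. With $\pi<\eta<A$, the hypothesis of Theorem~\ref{thm:gen_cover} is satisfied; rearranging the terms involving $g$ by means of $dg=-\sum_i d\hat f_i|_{\bar\Sigma}$ converts the conclusion of Theorem~\ref{thm:gen_cover} into a uniform lower bound
$$\max_\Sigma\sum_{i,j\in I}|\{\hat f_i|_\Sigma,\hat f_j|_\Sigma\}_\Sigma|\geq c_3(n).$$

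The hard part, which I expect to be the genuine content of the proof, is converting this $2$D slice bound into the desired ambient $2n$-dimensional bound. In local Darboux coordinates $\{\hat f_i|_\Sigma,\hat f_j|_\Sigma\}_\Sigma$ is only the $k=1$ summand of $\{\hat f_i,\hat f_j\}_{\R^{2n}}=\sum_k(\partial_{q_k}\hat f_i\,\partial_{p_k}\hat f_j-\partial_{p_k}\hat f_i\,\partial_{q_k}\hat f_j)$, and the remaining normal-derivative summands can in principle cancel the slice term at any isolated point. I would attempt to bypass this by running the slice argument uniformly on a $(2n-2)$-parameter family of parallel slices $\Sigma_v$ foliating the large ball, and then also letting the splitting direction itself vary over the $Sp(2n)$-orbit of symplectic $2$-planes through each point: by $Sp(2n)$-invariance the average of the slice bracket over all symplectic $2$-planes at a given point equals a dimensional constant times the ambient Poisson bracket, so the ensemble of slice-wise lower bounds, together with a genericity argument for the cancellation locus and Lemma~\ref{lem:pb_vs_sum}, should produce a point of $M$ at which $\sum_{i,j}|\{\hat f_i,\hat f_j\}|\gtrsim 1$. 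Inverting the homothety then yields the claimed $c(n)/\epsilon^2$ lower bound.
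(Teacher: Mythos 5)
Your first two reductions are sound and in fact match the first half of the paper's argument: passing to $\max_M\sum_{i,j}|\{f_i,f_j\}|$ via Lemma~\ref{lem:pb_vs_sum}, then pulling back to a Darboux chart and applying the homothety $\psi_\epsilon$ to reduce, via the quadratic homogeneity of the Poisson bracket, to a unit-scale statement in $(\R^{2n},\omega_0)$. The gap lies entirely in the step you correctly flag as ``the hard part'': deducing the ambient bound from the $2$-dimensional slice bounds. The slice bracket $\{\hat f_i|_\Sigma,\hat f_j|_\Sigma\}_\Sigma$ is just one of $n$ summands of the ambient bracket, and the other $n-1$ summands can cancel it. This is not a genericity issue that averaging repairs: already for linear functions such as $f=q_1+q_2$, $g=p_1-p_2$ on $\R^4$ one has $\{f,g\}_{\R^4}\equiv 0$ while the $(q_1,p_1)$-slice bracket is identically $1$. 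So a lower bound on $\max_\Sigma\sum_{ij}|\{\hat f_i|_\Sigma,\hat f_j|_\Sigma\}_\Sigma|$ carries no information about $\max\sum_{ij}|\{\hat f_i,\hat f_j\}|$. The averaging claim you invoke also points the wrong way: by $U(n)$-invariance the \emph{signed} slice bracket averaged over $2$-planes at a point may well be a dimensional multiple of the ambient bracket, but what your slice argument controls is the average of the \emph{absolute value} $\operatorname{Avg}_\Pi|\{\cdot,\cdot\}_\Pi|$, which by Jensen is $\geq|\operatorname{Avg}_\Pi\{\cdot,\cdot\}_\Pi|$ --- a lower bound on the left side gives no lower bound on the right side, and the example above shows the left side can be bounded away from zero while the right side vanishes. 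A further, secondary obstacle is that the point realizing $\max_{\Sigma_v}$ wanders with the slice $\Sigma_v$ and with the direction $\Pi$, so the lower bounds do not concentrate at a single point of $M$.

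The paper closes precisely this gap by a different tool: $C^0$-rigidity of the Poisson bracket, rather than the $2$D theorems. It introduces a relative invariant $pb(\mathcal U;K)$, takes $g=q_1\beta$, $h=p_1\beta$ on the unit ball $B\subset\R^{2n}$ with $\|\{g,h\}\|\geq1$, and uses $C^0$-rigidity (exactly as in Proposition~\ref{pro:pb_c0}) to produce a $\delta_0=\delta_0(n)>0$ so that $pb(\mathcal V;B)\geq\frac12$ whenever the cover $\mathcal V$ of $B$ has mesh $\leq\delta_0$. One then symplectically embeds a small ball $rB\hookrightarrow M$, normalizes the pullback metric at the center by a linear symplectomorphism, pulls the cover $\mathcal U^\epsilon$ back, and applies the homothety $\psi_{\delta_0/2\epsilon}$ to land in the unit-scale situation; the identity $pb(\{\psi_c(U)\};\psi_c(K))=c^{-2}pb(\mathcal U;K)$ then gives $pb(\mathcal U^\epsilon)\geq\delta_0^2/(8\epsilon^2)$. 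If you want an elementary route avoiding $C^0$-rigidity, you would need a genuinely higher-dimensional analogue of Theorem~\ref{thm:gen_cover}, which is not supplied by restriction to a two-dimensional symplectic slice.
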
\

\subsection{Acknowledgements}
We are deeply grateful to Fedor Nazarov, without whose help this paper would not have been written. He explained to us his proof of a preliminary version of the statement appearing in Corollary~\ref{cor:c0_bound}, and his ideas have had a significant impact on the paper. Unfortunately, he decided not to be a coauthor of the paper. 

We also thank Leonid Polterovich and Misha Sodin for fruitful discussions. Shira Tanny extends her special thanks to Leonid Polterovich for his mentorship and guidance.

L.B. was partially supported by ISF Grants 1380/13 and 2026/17, by the ERC Starting Grant 757585, and by the Alon Fellowship. A.L. was partially supported by ERC Advanced Grant~692616 and ISF Grants~1380/13 and 382/15. S.T. was partially supported by ISF Grants 178/13, 1380/13, and  2026/17. 


\section{Essential sets and Poisson bracket.}\label{sec:essential_sets}

Let $M$ be a closed connected surface, endowed with an area form $\omega$. For any smooth function $f$, we denote by $cp(f)$ the set of its critical points and by $cv(f) = f(cp(f))$ the set of its critical values. 
Our first lemma explains the relation between the $L^1$ norm of the Poisson brackets of two functions and intersections of their level sets.

\begin{lemma}\label{lem:pb_to_intersections}
	Let $f,g:M\rightarrow\R$ and denote $\Phi:=(f,g):M\rightarrow\R^2$. Consider the function $K:\R^2\rightarrow\R\cup\{\infty\}$ defined by $K(s,t):= \#(f^{-1}(s)\cap g^{-1}(t)) = \#\Phi^{-1}(s,t)$, then for any Lebesgue measurable set $\Omega\subset \R^2$, 
	\begin{equation}\label{eq:pb_to_intersections}
	\int_{\Phi^{-1}(\Omega)} |\{f,g\}|\ \omega = \int_\Omega K(s,t)\ ds\ dt.
	\end{equation}
\end{lemma}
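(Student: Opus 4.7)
The identity $df\wedge dg=\{f,g\}\omega$ says that $|\{f,g\}|\omega$ is precisely the pullback of the Lebesgue measure $ds\,dt$ on $\R^2$ by $\Phi$, taken in absolute value. The claimed equality is therefore an instance of the classical change-of-variables (area) formula for the smooth map $\Phi\colon M\to\R^2$ between oriented $2$-manifolds, and my plan is to reduce to that formula by standard arguments involving Sard's theorem and a local-to-global patching.

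First I would dispose of the critical set. At any point of $cp(\Phi)$, $df$ and $dg$ are linearly dependent, hence $\{f,g\}=0$ there; so removing $cp(\Phi)$ from the domain of integration on the left-hand side costs nothing. On the right-hand side, Sard's theorem yields that $\Phi(cp(\Phi))\subset\R^2$ has zero Lebesgue measure, so I may intersect $\Omega$ with the open set of regular values $\R^2\setminus cv(\Phi)$ without changing the value of either side. Thus I may assume that $\Omega$ consists entirely of regular values of $\Phi$, and that I integrate only over the open set $M\setminus cp(\Phi)$.

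On $M\setminus cp(\Phi)$, $\Phi$ is a submersion between $2$-manifolds, hence a local diffeomorphism; combined with the compactness of $M$, this gives that $K(s,t)$ is finite and locally constant on the open set of regular values. I would then choose a countable open cover $\{B_\alpha\}$ of the regular values by small open discs in $\R^2$ such that each $\Phi^{-1}(B_\alpha)$ is a disjoint union of finitely many open sets $V_{\alpha,1},\dots,V_{\alpha,K_\alpha}$, with $\Phi$ restricting to a diffeomorphism $V_{\alpha,k}\to B_\alpha$ for each $k$. On each such $V_{\alpha,k}$, the ordinary change-of-variables formula (applied in any oriented chart on $M$ that contains $V_{\alpha,k}$) yields
\begin{equation*}
\int_{V_{\alpha,k}\cap\Phi^{-1}(\Omega)}|\{f,g\}|\,\omega\;=\;\int_{B_\alpha\cap\Omega}ds\,dt.
\end{equation*}
Summing over $k=1,\dots,K_\alpha$ produces the factor $K(s,t)=K_\alpha$ on the patch $B_\alpha$.

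Finally I would glue these local identities into the global one by choosing a measurable partition of the set of regular values subordinate to $\{B_\alpha\}$ and summing. The main (mild) obstacle is bookkeeping: ensuring the countable cover can be chosen so that the pieces on $M$ fit together into a measurable partition of $\Phi^{-1}(\Omega\setminus cv(\Phi))$ without double-counting, and that Fubini/monotone convergence apply to justify interchanging the summation with the integrals. Once this is set up, the content of the lemma reduces entirely to Sard's theorem plus the elementary change-of-variables formula for a diffeomorphism between planar open sets.
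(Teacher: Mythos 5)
Your strategy is the same as the paper's: dispose of the critical set via Sard's theorem, then use the local change of variables near regular values and patch. However, your opening reduction has a small but genuine gap. You argue (i) $|\{f,g\}|$ vanishes on $cp(\Phi)$, so removing $cp(\Phi)$ from the domain of integration on the left does not change anything, and (ii) Sard gives that $cv(\Phi)$ has measure zero in $\R^2$, so replacing $\Omega$ by $\Omega\setminus cv(\Phi)$ does not change the right-hand side. You then conclude that you may assume $\Omega\subset\R^2\setminus cv(\Phi)$ "without changing the value of either side". But (i) and (ii) together do not yet justify this for the left-hand side: the set $\Phi^{-1}\bigl(\Omega\cap cv(\Phi)\bigr)$ consists not only of critical points of $\Phi$ but also of the set $Z$ of \emph{regular} points of $\Phi$ that happen to land in $cv(\Phi)$, and on $Z$ the integrand $|\{f,g\}|$ need not vanish. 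To close the gap you must show that $Z$ has measure zero in $M$; the paper does this by observing that $\Phi$ is a local diffeomorphism near each point of $Z$, and local diffeomorphisms pull back null sets to null sets, so $Z\subset\Phi^{-1}(cv(\Phi))$ inherits measure zero from $cv(\Phi)$. Once you insert this one observation, the remainder of your patching argument (covering the regular values by small discs $B_\alpha$ on which $\Phi^{-1}(B_\alpha)$ is a finite disjoint union of sheets, applying the planar change-of-variables sheet by sheet, and summing via monotone convergence) matches the paper's Step 1, with only cosmetic differences in how the countable cover and exhaustion are organized.
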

Note that the integral on the left-hand side is taken with respect to the volume density given by $\omega$. For the proof of the lemma, see Appendix \ref{app:lemma-intgeom-proof}.

Lemma~\ref{lem:pb_to_intersections} suggests that one can estimate the $L^1$ norm of the Poisson bracket of two functions by counting intersections of their level sets. It turns out that when $f:=f_i$ corresponds to an essential set $U_i\in\mathcal U$, one can bound from below the number of intersections of level sets of $f_i$ and level sets of any other function $f_j$ from the partition of unity. For a more formal description we need to present some notations. 
Given an open cover $\mathcal U= \{U_i\}_{i\in I}$ and a subordinate partition of unity $\mathcal F=\{f_i\}_{i\in I}$, denote  
\begin{equation}
	U_i(t):=\{x\in M: f_i(x)>t\},
\end{equation} 
for $i\in I$, $t\geq0$.
Clearly, for any such $t$, $U_i(t)\subset U_i$. Moreover, the boundary of $U_i(t)$ is contained in the $t$-level set of $f_i$, namely, $\partial U_i(t) \subset \{x\in M: f_i(x)=t\}$. For a subset $U\subset M$, we denote by $U^c:=M\setminus U$ its complement. The following definitions will be useful:

\begin{defin} \label{def:ps-bdry}
Let $ M $ be a smooth closed surface, and let $ V \subset M $ be an open {(or closed)} set. We say that $ V $ has a piecewise smooth boundary if $ \partial V $ is a finite union of disjoint curves $ \Gamma_1, \ldots, \Gamma_m $, such that each $ \Gamma_j $ is a simple, closed, piecewise smooth and regular curve. 
\end{defin}

\begin{defin} \label{def:enclosing-disc}
	Let $V\subset M$ be an open (or closed) connected subset with a piecewise smooth boundary, which is contained in a topological disc of area less than $ area(M)/2 $. There exists a unique connected component of $ M \setminus V $ of area greater than $ area(M)/2 $. The {\it enclosing disc} of $ V $ is by definition the complement of this connected component, and it is denoted by $ \tilde V $ (see Figure \ref{fig:enclosing_disc} for an example).
\end{defin} 

\begin{figure}
	\centering
	\includegraphics[scale=0.6]{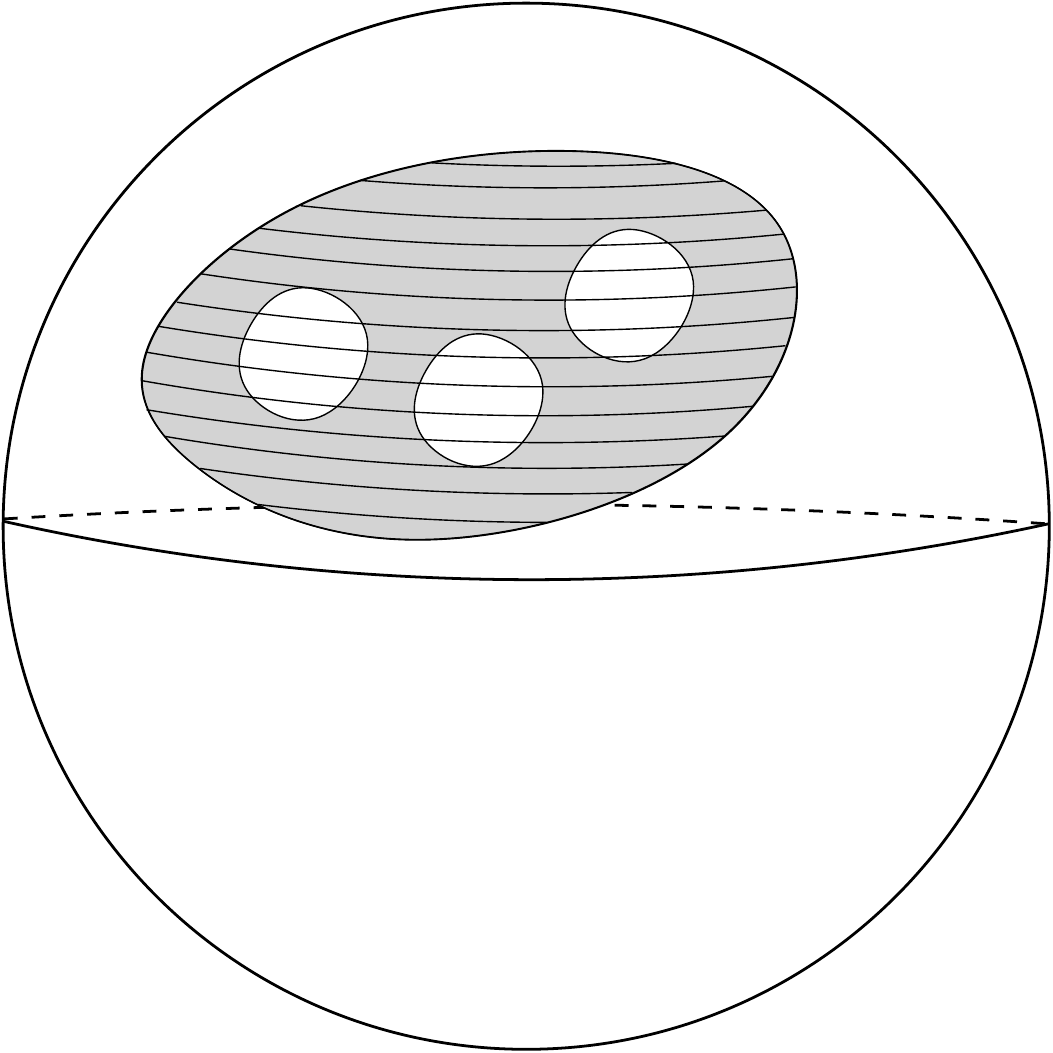}
	\caption{\small{The grey domain on the sphere and its enclosing disc (whose area is hatched).}}
	\label{fig:enclosing_disc}	
\end{figure}

\begin{rem}\label{rem:enlosing-disc}\
\begin{itemize}
 \item For any subset $V\subset M$ as in Definition \ref{def:enclosing-disc}, we have $\partial \tilde V\subset \partial V$.
 \item Let $V\subset M$ be a subset as in Definition \ref{def:enclosing-disc}. Then its enclosing disc $\tilde V$ is  the open (respectively, closed) topological disc of minimal area that contains it. In particular, if $ U $ is an open topological disc of area less than $ area(M)/2 $ which compactly contains $ V $, then $ U \supset \tilde V $.
\end{itemize}
\end{rem}

\begin{proof}[Proof of Theorem~\ref{thm:essential}]
In the following we prove that if $U_i$ is essential then 
\begin{equation}\label{eq:1ess_bnd}
\sum_{j\in I}\int_M |\{f_i,f_j\}|\ \omega\geq 1.
\end{equation}
Summing (\ref{eq:1ess_bnd}) over all $i\in I_{ess}(\mathcal U)$ yields (\ref{eq:ess_bnd1}). To conclude (\ref{eq:ess_bnd2}), apply (\ref{eq:1ess_bnd}) to the essential set of minimal area and notice that $$\sum_{j\in I}\int_M |\{f_i,f_j\}|\ \omega = \sum_{j\in I}\int_{U_i} |\{f_i,f_j\}|\ \omega\leq area(U_i)\cdot \max_M\sum_{j,k\in I} |\{ f_k,f_j\}|.$$

We turn to prove (\ref{eq:1ess_bnd}). Fix $i\in I_{ess}$, then there exists a point $z_i\in U_i$ such that for all $j\neq i$, $z_i\notin U_j$. Since all functions but $f_i$ vanish at $z_i$, we conclude that $f_i(z_i) = 1 $ and hence $z_i\in U_i(s)$ for all $s\in (0,1)$. For a regular value $ s \in (0,1) $ of $ f_i $, denote by $V_i(s)$ the connected component of $ U_i(s) $ that contains $z_i$, and by $\tilde V_i(s)$ the enclosing disc of $V_i(s)$. We have $\partial \tilde V_i(s)\subset \partial V_i(s)$. Denote 
\begin{equation}
\gamma^s:= \partial \tilde V_i(s),
\end{equation}
then $\gamma^s$ is connected and is contained in the level set $\{f_i =s\}$. 
For every regular value $s\in (0,1)$ of $ f_i $, fix $y^s\in \gamma^s$ and for each $j\neq i$ denote $t_j^s:= f_j(y^s)\in \R$. Fix $j\neq i$, and let $t\in (0,t_j^s)$ be a regular value of $ f_j $. We have $y^s\in U_j(t)$, since $f_j(y^s) = t_j^s>t$. Denote by $D_j(t)$ the closure of the connected component of $U_j(t)$ that contains $y^s$, and denote by $\tilde D_j(t)$ the enclosing disc of $ D_j(t) $. Then, $\partial \tilde D_j(t)\subset \partial D_j(t)\subset \{f_j = t\}$. See Figure~\ref{fig:essential_proof} for a demonstration of this setting.
\begin{figure}
	\centering
	\includegraphics[scale=0.7]{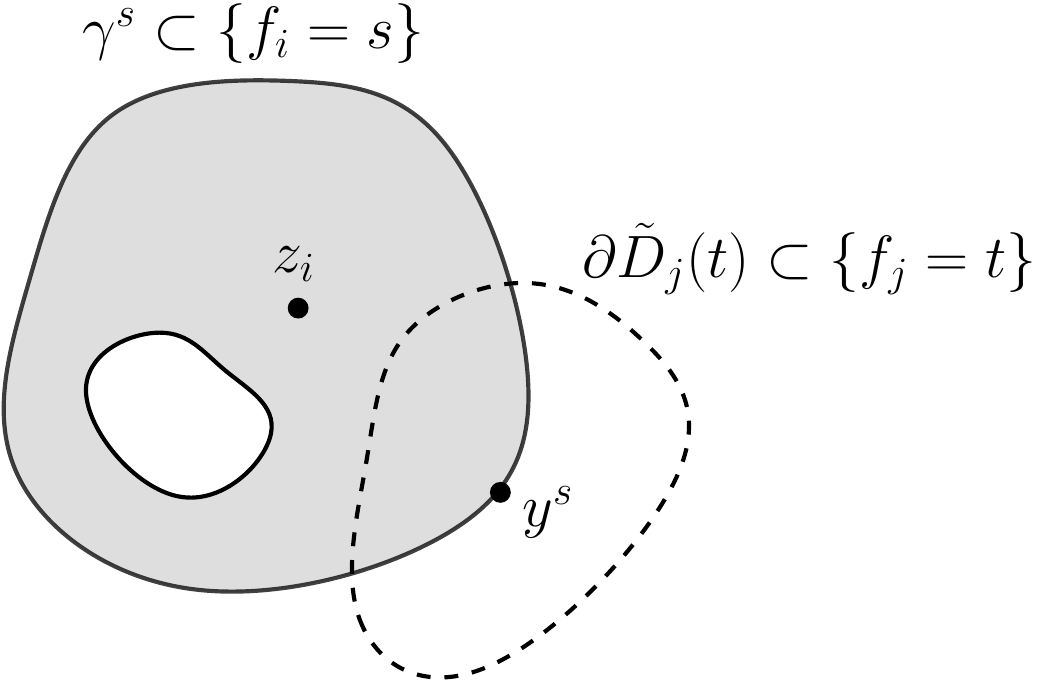}
	\caption{\small{An example for the setting described in the proof of Theorem~\ref{thm:essential}. In this example, the gray region is $V_i(s)$, the solid lines are the $s$-level set of $f_i$, and the outer component is $\gamma^s$. The dashed line is the boundary of $ \tilde D_j(t) $, which is a part of the $t$-level set of $f_j$, for some $t<f_j(y^s)$.  }}
	\label{fig:essential_proof}	
\end{figure}
We claim that $\gamma^s$ has at least two points of intersection with $\partial \tilde D_j(t)$. Since the interior of $\tilde D_j(t)$ intersects $\gamma^s$ (as they both contain $y^s$), it is enough to show that $\gamma^s$ is not contained in $\tilde D_j(t)$. Recalling that $\gamma^s$ is the boundary of  $\tilde V_i(s)$, this is equivalent to showing that both $\tilde V_i(s)$ and its complement  $\tilde V_i(s)^c$ are not contained in  $\tilde D_j(t)$. 
Recall that $U_j$ is a topological disc containing $D_j(t)$, and hence $\tilde D_j(t)\subset U_j$. The topological disc $\tilde V_i(s)$ contains $z_i\notin U_j$ and thus is not contained in $U_j$. In particular, we conclude that $V_i(s)$ is not contained in $\tilde D_j(t)$. Finally, to show that $\tilde V_i(s)^c\nsubseteq \tilde D_j(t)$, recall that $\tilde V_i(s)\subset U_i$ (since $U_i$ is a topological disc containing $V_i(s)$) and therefore   
$$
area (\tilde D_j(t))\leq area(U_j)<\frac{area(M)}{2}< area(U_i^c)\leq area(\tilde V_i(s)^c).
$$ 
This implies that $\tilde V_i(s)^c\nsubseteq \tilde D_j(t)$ and hence we conclude that $\gamma^s$ intersects $\partial \tilde D_j(t)$ at least twice. Since $\gamma^s\subset \{f_i = s\}$ and $\partial \tilde D_j(t)\subset \{f_j=t\}$, we have $\#\{f_i = s\}\cap\{f_j = t\}\geq 2$ for any regular value $s\in(0,1)$ of $ f_i $, and any regular value $t\in (0,t_j^s)$ of $ f_j $. Putting $K_{ij}(s,t):= \#\{f_i = s\}\cap\{f_j = t\}$ and applying Lemma~\ref{lem:pb_to_intersections} to $f_i$ and $f_j$ with $\Omega:= \{(s,t): t\in (0,t_j^s), s\in(0,1)\}$ we obtain 
\begin{eqnarray*}
\int_{M} |\{f_i,f_j\}|\ \omega &\geq&\int_{\Phi^{-1}(\Omega)} |\{f_i,f_j\}|\ \omega\\
&=&\int_\Omega K_{ij}(s,t)\ ds\ dt\\
&\geq& \int_0^1 \int_0^{t_j^s} 2 \ dt\ ds = 2\int_0^1 t_j^s ds. 
\end{eqnarray*}
Now, recalling that $t_j^s = f_j(y^s)$, and summing the above inequality over all $j\neq i$ we get
\begin{eqnarray*}
\sum_{j\in I} \int_{M} |\{f_i,f_j\}|\ \omega &\geq& 2\sum_{j\neq i}\int_0^1 f_j(y^s)\ ds\\
&=&  2\int_0^1 \sum_{j\neq i} f_j(y^s)\ ds = 2\int_0^1 1-f_i(y^s)\ ds.
\end{eqnarray*}
Since we chose $y^s\in \gamma^s\subset \{f_i = s\}$, we have $f_i(y^s) = s$ and thus
\begin{equation*}
\sum_{j\in I} \int_{M} |\{f_i,f_j\}|\ \omega \geq 2\int_0^1 1-s\ ds = 2\cdot\frac{1}{2} = 1.
\end{equation*}
\end{proof}

\begin{rem} \label{rem:not-ness-discs}
	In Theorem \ref{thm:essential} we assume that the covering sets $ U_i $ are topological discs. However, 
	when an open cover $\cU = \{ U_i \}_{i \in I} $ does not necessarily consist of topological discs, but the covering sets have piecewise smooth boundary, 
	then we can pass to a cover by topological discs in two steps. 
	
	First, consider the collection $ \cV = \{ V_j \}_{j \in J} $ of all connected components of all the $ U_i $'s. Given any partition of 
	unity $ \cF = \{ f_i \}_{i \in I} $, subordinate to $ \cU $, we naturally get a 
	partition of unity $ \cG = \{ g_j \}_{j \in J} $   subordinate to $ \cV $, as follows: for every $ V_j $ being a connected component 
	of $ U_i $, we set $ g_j = f_i \id_{V_j}$, where $ \id_{V_i} $ is the characteristic function of $ V_j $ on $ M $.
	We moreover have $ \sum_{i,j\in I} | \{f_i,f_j\} | = \sum_{i,j \in J} | \{g_i,g_j\} | $. This reduces proving estimates  $(\ref{eq:ess_bnd1})$ 
	and $(\ref{eq:ess_bnd2})$ from Theorem~\ref{thm:essential} for the cover $ \cU $, to 
	proving them for the cover $ \cV $. Of course, if the covering sets $ U_i $ are connected from the beginning, the cover $ \cV $ is the same as $ \cU $.
	
	Second, denoting by $ \tilde V_j $ the enclosing disc of $ V_j $, for each $ j $, we get a cover $ \tilde \cV = \{ \tilde V_j \}_{j\in J} $ by displaceable open topological discs, and the partition of unity $ \cG = \{ g_j \}_{j\in J} $ clearly 
	subordinate to $ \tilde \cV $ as well.  Therefore any lower bound for the latter cover will also hold for $\mathcal U$. However, one should notice that when applying the first part of Theorem~\ref{thm:essential} to such a 
	general cover $ \cU $ by open sets with piecewise smooth boundaries (not necessarily by topological discs), the bound will depend on the number of essential sets in $\tilde \cV$:
	\begin{equation}
	\int_M \sum_{i,j} |\{f_i,f_j\}|\ \omega\geq |I_{ess}(\tilde{\mathcal V})|.
	\end{equation}
	The second part of Theorem~\ref{thm:essential} can be written in terms of the displacement energy of sets in $\mathcal V$. Indeed, by Remarks~\ref{rem:energy_vs_area} and~\ref{rem:enlosing-disc},  $e(V) = area(\tilde V)$. Applying the second part of Theorem~\ref{thm:essential} to $\tilde{\mathcal V}$ yields
	\begin{equation}
	\max_M\sum_{i,j} |\{f_i, f_j\}| \geq \frac{1}{\min_{\ell\in I_{ess}(\tilde{\mathcal V})} e(V_\ell)}.
	\end{equation}	
\end{rem}

\section{Bounds for general covers.}\label{sec:gen_covers}

In the general case, estimating the number of intersections of level sets is more complicated.  
\begin{defin}
	Two covers  $\mathcal U=\{U_i\}_{i\in I}$, $\mathcal V=\{V_j\}_{j\in J}$ of $M$ are said to be {\it in generic position} if the following triple intersections of boundaries are empty:
	\begin{equation}
	\partial U_i\cap \partial U_k\cap \partial V_j=\emptyset,\ \ \partial U_i\cap\partial V_j\cap \partial V_\ell =\emptyset,
	\end{equation}
	for all $i,k\in I$, $ i \neq k $, and $j,\ell\in J$, $ j \neq \ell $.
\end{defin}
The central lemma in the proof of Theorem~\ref{thm:gen_cover} is the following:
\begin{lemma}\label{lem:bound_no_intersections}
	Let  $\mathcal U=\{U_i\}_{i\in I}$, $\mathcal V=\{V_j\}_{j\in J}$ be finite open covers of $M$ with smooth boundaries, and assume that $\mathcal U$ and $\mathcal V$ are in generic position. Moreover, assume that for some $ 0 < A < area(M)/2 $, each element of $ \cU $ or $ \cV $ is compactly contained inside a topological disc of area not greater than $ A $. Suppose in addition that there exists $L\in\N$ such that for any point $x\in M$, $\#\{i\in I:x\in U_i\}\geq L$ and $\#\{j\in J:x\in V_j\}\geq L$. Then 
	\begin{equation}
	\# \cup_{i,j}(\partial U_i\cap\partial V_j)\geq L^2\cdot \frac{area(M)}{2A}.	
	\end{equation}
\end{lemma}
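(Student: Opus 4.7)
My plan is to reduce the combinatorial intersection count to an area-weighted problem via the multiplicity hypothesis, and then to control the topology of the pairwise intersections.

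First, I would reduce to the case where each $U_i$ and $V_j$ is a topological disc of area at most $A$. By the enclosing-disc construction of Remark~\ref{rem:not-ness-discs}, replacing each connected component of a covering element by its enclosing disc preserves the multiplicity lower bound $L$, preserves the area bound $A$, and can only decrease the count $\#\bigcup_{i,j}(\partial U_i \cap \partial V_j)$. So it suffices to prove the lemma in this topologically cleaner setting.

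Next, I would use the pointwise inequalities $m_\cU \geq L$ and $m_\cV \geq L$ to derive the identity
\[
\sum_{i,j} \text{area}(U_i \cap V_j) \;=\; \int_M m_\cU(x)\, m_\cV(x)\, \omega \;\geq\; L^2 \cdot \text{area}(M).
\]
For each pair $(i,j)$ with $U_i \cap V_j \neq \emptyset$, since $U_i$ and $V_j$ are topological discs of area less than $\text{area}(M)/2$, either one set is contained in the other (``nested'', with $n_{ij} = 0$) or the two boundary circles cross transversally in an even number $n_{ij} \geq 2$ of points (``crossing''); in the latter case $\text{area}(U_i \cap V_j) \leq A$. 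Therefore, if I can show that crossing pairs absorb a definite positive fraction (e.g.\ at least a quarter) of the total area $L^2 \cdot \text{area}(M)$, then the number of crossing pairs is at least $L^2 \cdot \text{area}(M)/(4A)$, whence $\sum_{i,j} n_{ij} \geq 2\cdot\#\{\text{crossing pairs}\} \geq L^2 \cdot \text{area}(M)/(2A)$, as required.

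The hard part will be showing that nested pairs cannot absorb more than a constant fraction of the total area integral. My first approach would be a double-counting along the boundary curves: for each $i$ and each point $p \in U_i$, the $\geq L$ sets $V_j$ containing $p$ split into those containing $U_i$ entirely (nested, $U_i \subset V_j$), those contained in $U_i$ (nested, $V_j \subset U_i$), and those crossing $U_i$ properly (each contributing $\geq 2$ boundary intersections). Integrating this local splitting over $U_i$, summing over $i$, and combining with the symmetric bound obtained by swapping $\cU$ and $\cV$, should yield the required control, exploiting the area constraint $A < \text{area}(M)/2$ to limit how many discs of area $\leq A$ can simultaneously be nested inside a single $V_j$. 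An alternative route would be to decompose each of $\cU$, $\cV$ into $L$ sub-covers of multiplicity $\geq 1$ and apply the $L=1$ case to each of the $L^2$ pairs of sub-covers (with disjoint crossing counts); however, such combinatorial decompositions are known to fail for general covers, so the direct double-counting is the safer path.
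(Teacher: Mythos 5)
Your approach is genuinely different from the paper's, and it has a real gap: the key step --- that crossing pairs must absorb a definite positive fraction (say $\tfrac14$) of the area integral $\sum_{i,j}\text{area}(U_i\cap V_j)\geq L^2\cdot\text{area}(M)$ --- is not just unproven but false in general. Here is a family of covers for which nested pairs absorb essentially \emph{all} of the area integral. Fix a minimal cover $W_1,\dots,W_N$ of $M$ by topological discs of area $\approx A$ (so $N\approx\text{area}(M)/A$ and the multiplicity is $\approx 1$ a.e.). Let $\cU$ consist of $L$ generic small perturbations $W_k^{(\ell)}$ of each $W_k$, and let $\cV$ consist of $L$ slight enlargements $\widetilde W_k^{(\ell)}$ of $W_k$ chosen so that $\widetilde W_k^{(\ell')}\supset W_k^{(\ell)}$ for all $\ell,\ell'$. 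Both covers have multiplicity $\approx L$, each set has area $\approx A$, and one can perturb so they are in generic position. The $N L^2$ nested pairs $(W_k^{(\ell)},\widetilde W_k^{(\ell')})$ contribute $\approx N L^2 A\approx L^2\cdot\text{area}(M)$, which is already the whole area integral; crossing pairs contribute only a lower-order amount. The conclusion of the lemma still holds in this example (the many cross-$k$ pairs supply the intersections), but your area-counting plan cannot see them. Your ``double-counting along the boundary curves'' sketch does not repair this, because the area constraint $A<\text{area}(M)/2$ places no limit on how many sets of $\cU$ can be nested inside a single $V_j$, nor on the total area such nestings contribute.

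By contrast, the paper avoids area-counting of pairs entirely. For each pair of permutations $\alpha\in S_I$, $\beta\in S_J$ it builds the ``first-boundary-hit'' curve systems $\Gamma_\alpha=\bigcup_i\bigl(\partial U_{\alpha(i)}\cap U_{\alpha(i-1)}^c\cap\cdots\cap U_{\alpha(1)}^c\bigr)$ and $\Gamma'_\beta$, shows they are $A$-divisions of $M$, and invokes Lemma~\ref{lem:divisions_intersection} to get $\#(\Gamma_\alpha\cap\Gamma'_\beta)\geq\text{area}(M)/(2A)$. It then observes that a fixed point $x\in\partial U_i\cap\partial V_j$ lies on $\Gamma_\alpha$ only when $\alpha^{-1}(i)$ precedes $\alpha^{-1}(k)$ for each of the $\geq L$ indices $k$ with $x\in U_k$, a fraction $\leq 1/(L+1)$ of permutations; averaging over $(\alpha,\beta)$ turns the weight $\leq 1/(L+1)^2$ per intersection point into the factor $L^2$. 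This onion-peeling-plus-averaging mechanism is precisely what handles the nested configurations that your approach cannot, so if you want to rescue your argument you would need to replace the area-of-intersections quantity by something that discounts nestings.
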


Let us illustrate the heuristics underlying the proof of Theorem~\ref{thm:gen_cover} before giving the details.
Let $\cU=\{U_i\}_{i\in I}$, $\cV=\{V_j\}_{j\in J}$ be two open covers of $M$ and let $\cF=\{f_i\}_{i\in I}$, $\cG=\{g_j\}_{j\in J}$ be subordinate partitions of unity, as in the theorem. In the light of Lemma~\ref{lem:pb_to_intersections}, we wish to estimate the number of intersections of level sets. 
Fix $L\in\N$ sufficiently large and denote $U_{i,k}=\big\{f_i>\frac{k}{L}\big\}$, $V_{j,\ell}:=\big\{g_j>\frac{\ell}{L}\big\}$, where $ k $ and $ \ell $ are positive integers. Then the boundaries of $U_{i,k}$ and $V_{j,\ell}$ are contained in level sets of $f_i$, $g_j$ respectively. Given $x\in M$, let us estimate the number of sets in $\{U_{i,k}\}_{i,k}$ containing $x$. For fixed $i\in I$, 
\begin{equation*}
\#\{k: x\in U_{i,k}\} = \{k: f_i(x)>k/L\} \geq Lf_i(x) -1.
\end{equation*}
Therefore, the number of sets $U_{i,k}$ containing $x$ is at least $\sum_i(Lf_i(x) - 1) = L-|I|$. Similarly, one can show that the number of sets $V_{j,\ell}$ containing $x$ is at least $L-|J|$. In particular, when $L$ is sufficiently large, $\{U_{i,k}\}_{i,k}$ and $\{V_{j,\ell}\}_{j,\ell}$ are open covers of $M$, that satisfy the conditions of Lemma~\ref{lem:bound_no_intersections} for $\hat L:= L-|I|-|J|$ (namely, every point in $M$ is contained in at least $\hat L$ sets). Applying Lemma~\ref{lem:bound_no_intersections} to the covers $\{U_{i,k}\}_{i,k}$, $\{V_{j,\ell}\}_{j,\ell}$ we obtain 
\begin{equation*}
\sum_{i,k,j,\ell} \#\partial U_{i,k}\cap\partial V_{j,\ell} \geq \hat L^2\cdot \frac{area(M)}{2A}.
\end{equation*}
On the other hand, one expects that in a generic situation, given $i,j$ and sufficiently large $L$, the sum $\frac{1}{L^2} \sum_{k,\ell} \#\partial U_{i,k}\cap\partial V_{j,\ell} =\frac{1}{L^2} \sum_{k,\ell} \#\big(f_i^{-1}(\frac{k}{L})\cap g_j^{-1}(\frac{\ell}{L})\big)$ will approximate the integral of $K_{ij}(s,t):= \#\big(f_i^{-1}(s)\cap g_j^{-1}(t)\big)$. Using Lemma~\ref{lem:pb_to_intersections} and taking the limit $L\rightarrow\infty$ we obtain
\begin{equation*}
\sum_{i,j} \int_M |\{f_i,g_j\}|\ \omega \geq \lim_{L\rightarrow\infty} \frac{\hat L^2}{L^2}\cdot \frac{area(M)}{2A}, 
\end{equation*}
which implies Theorem~\ref{thm:gen_cover}.

Now let us pass to the actual proofs. We will need the following definition:
\begin{defin}
	Let $ \gamma_1, \ldots, \gamma_m \subset M $ be a finite collection of smooth regular curves with a finite number of mutual intersection points. Denote $\Gamma = \gamma_1 \cup \cdots \cup \gamma_m $. 
	\begin{itemize}
		\item A connected component of the complement $M\setminus \Gamma$ is called a {\it face} of $\Gamma$.
		\item A point $v\in\Gamma$ that lies in the intersection of two (or more) curves is called a {\it vertex} of $\Gamma$.
		\item $\Gamma$ is called an {\it $A$-division of $M$}, if every face of $\Gamma$ has a piecewise smooth boundary (as in Definition \ref{def:ps-bdry}) and is compactly contained in an open topological disc of area not greater than $A$.
	\end{itemize}
\end{defin}

\begin{lemma}\label{lem:divisions_intersection}
	Let $\Gamma,\ \Gamma'\subset M$ be $A$-divisions of $M$ for some $A < area(M)/2$, and assume that no vertex of $\Gamma$ lies on $\Gamma'$ and vise versa. Then, 
	\begin{equation}
	\# (\Gamma\cap\Gamma')\geq \frac{area(M)}{2A}.
	\end{equation}	
\end{lemma}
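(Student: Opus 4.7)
The plan is to analyze the graph $G := \Gamma \cup \Gamma'$ on $M$, bound its face count from below using the area constraint, and convert this into a bound on crossings via double-counting.

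First, every face of $G$ is contained in a face of $\Gamma$, which by the $A$-division hypothesis lies inside an open topological disc of area at most $A$. Hence every face of $G$ has area at most $A$, and so
\[
\# \mathrm{faces}(G) \;\geq\; \frac{\mathrm{area}(M)}{A}.
\]

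Second, I would classify each face $\phi$ of $G$: call it \emph{mixed} if some boundary component of $\phi$ contains arcs from both $\Gamma$ and $\Gamma'$, and \emph{pure} otherwise. By the generic-position hypothesis, each point of $\Gamma \cap \Gamma'$ is a transversal crossing at which four face-corners of $G$ meet, all in mixed faces; moreover every mixed face has at least two such crossing-corners on its boundary, since any boundary loop containing both arc types must switch between them an even, nonzero number of times (transitions can only occur at $\Gamma \cap \Gamma'$). Writing $t(\phi)$ for the number of crossing-corners of $\phi$, the double count $4N = \sum_\phi t(\phi) \geq 2 \cdot \#\{\mathrm{mixed}\}$ yields $N \geq \#\{\mathrm{mixed}\}/2$.

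The main step, and the hardest one, is then to show $\#\{\mathrm{mixed}\} \geq \mathrm{area}(M)/A$, i.e.\ to control the pure faces. Pure faces come in three flavours: those bounded only by $\Gamma$ (faces of $\Gamma$ disjoint from $\Gamma'$, each compactly contained in a single face of $\Gamma'$), those bounded only by $\Gamma'$ (symmetrically), and ``nested'' faces whose boundary loops are monochromatic but come from both divisions. The plan is to exploit the enclosing-disc construction together with the strict inequality $A < \mathrm{area}(M)/2$---which forbids any enclosing disc from covering more than half of $M$---to associate each pure face with a nearby mixed face in an injective fashion: following the outward boundary of the enclosing disc of a pure face, one must eventually meet the opposing division (since the enclosing disc of the pure face cannot ``fit'' inside a single face of the opposing division in a way that avoids crossings, given the area budget), producing the required mixed companion. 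The hypothesis $A < \mathrm{area}(M)/2$ is essential here---without it, a pair of parallel non-intersecting $\Gamma$- and $\Gamma'$-curves could spawn nested pure faces freely and the conclusion would fail.
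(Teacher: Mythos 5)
Your setup through the double count $4N = \sum_\phi t(\phi) \geq 2 \cdot \#\{\mathrm{mixed}\}$ is correct, but the argument then has two genuine problems. First, the main step you identify --- showing $\#\{\mathrm{mixed}\} \geq \mathrm{area}(M)/A$ --- is only sketched, and even if the sketched injection of pure faces into mixed faces were established it would give only $\#\{\mathrm{pure}\}\leq\#\{\mathrm{mixed}\}$, hence $\#\{\mathrm{mixed}\}\geq \tfrac12 \#\{\mathrm{faces}\}\geq \mathrm{area}(M)/(2A)$, and then $N\geq \mathrm{area}(M)/(4A)$, a factor of $2$ weaker than the lemma claims. Second, the injection itself is in doubt: a single face $P'$ of $\Gamma'$ can contain in its interior arbitrarily many disjoint small loops of $\Gamma$, producing arbitrarily many pure faces of $\Gamma\cup\Gamma'$ inside $P'$ --- the small discs, and also $P'$ minus those discs, all of whose boundary components are monochromatic. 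The area hypothesis gives no lower bound on face areas, so nothing forbids this nesting, and the enclosing discs of all these pure faces lead to the same ambient region; it is not clear where distinct ``mixed companions'' would come from, nor does the hypothesis $A<\mathrm{area}(M)/2$ obviously help here since all the relevant sets are small.

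The paper's proof sidesteps the pure/mixed bookkeeping entirely by working with faces of $\Gamma$ and $\Gamma'$ themselves rather than with faces of the arrangement $\Gamma\cup\Gamma'$. After first modifying each division so that every face is an open disc (using the enclosing-disc construction), it calls a $\Gamma$-face \emph{maximal} if it is not properly contained in any $\Gamma'$-face. A short nesting argument shows that every non-maximal $\Gamma$-face lies inside a maximal $\Gamma'$-face, so the maximal faces of the two divisions together cover $M$ up to measure zero; choosing, without loss of generality, $\Gamma$ to be the division whose maximal faces cover at least half of $M$ gives at least $\mathrm{area}(M)/(2A)$ such faces. Each maximal $\Gamma$-face has a single boundary curve that crosses $\Gamma'$ at least twice (this is where $A<\mathrm{area}(M)/2$ enters, via a connectivity argument), and each point of $\Gamma\cap\Gamma'$ lies on the boundary of exactly two $\Gamma$-faces. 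This double count pairs each crossing with only two $\Gamma$-faces rather than four arrangement-faces, so it yields $N\geq \mathrm{area}(M)/(2A)$ directly, with no lost factor. If you want to rescue your route, you would need both a precise and injective pure-to-mixed assignment and a way to recover the factor of $2$, which seems harder than the maximal-face argument.
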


\begin{proof}
	First, let us show that by removing parts from $\Gamma$ and $\Gamma'$, we may assume that their faces are open topological discs. The fact that faces 
	of $\Gamma$, $\Gamma'$ are compactly contained in open topological discs of area not greater than $A$ will guarantee that  $\Gamma$, $\Gamma'$ will remain 
	$A$-divisions after removing these parts. More formally, let $P\subset M\setminus\Gamma$ be a face of $ \Gamma $, then it is compactly contained in an open 
	topological disc of area not greater than $A$. Let $\tilde P\supset P$ be the enclosing disc of $ P$. Then, $\partial \tilde P\subset \partial P\subset \Gamma$  and hence, 
	removing $\Gamma\cap \tilde P$ from $\Gamma$, we obtain that $\tilde P$ is a face of $\Gamma$ which is an open topological disc with piecewise 
	smooth boundary (see Figure~\ref{fig:fixing_Gamma}). Moreover, since $ P $ is compactly contained in a topological 
	disc of area not greater than $A$, so is its enclosing disc $\tilde P$. Therefore, $\Gamma$ remains an $A$-division after removing $\Gamma\cap \tilde P $.
	\begin{figure}
		\centering
		\includegraphics[scale=0.7]{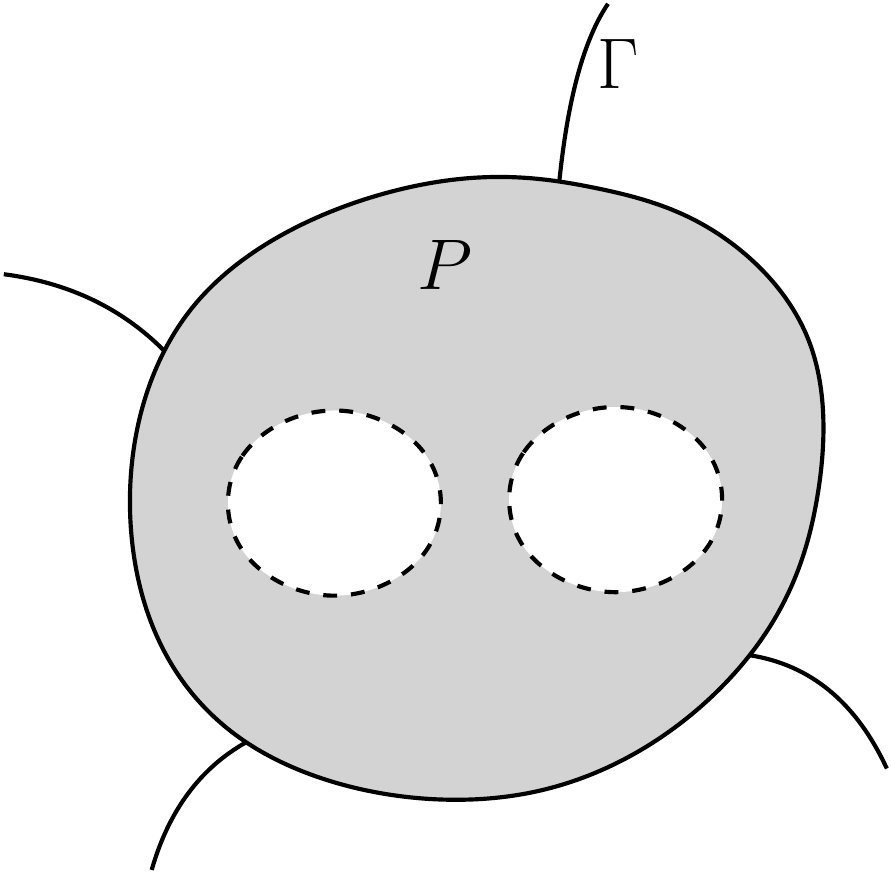}
		\caption{\small{In this example the dashed lines are removed from $\Gamma$. }}
		\label{fig:fixing_Gamma}	
	\end{figure}
	
	Having this assumption we turn to bound the number of intersections of $\Gamma$ and $\Gamma'$. We say that a face $G$ of $\Gamma$ is {\it maximal} if it is not properly contained in any face of $\Gamma'$. Defining similarly maximality of faces of $\Gamma'$, we observe that any non-maximal face of $\Gamma$ is contained in a maximal face of $\Gamma'$. Therefore, the union of maximal faces of both $\Gamma$, $\Gamma'$ covers $M$ up to a subset of area zero. Let us assume, without loss of generality, that the maximal faces of $\Gamma$ cover at least half the area of $M$. Then, since each face has area smaller than $A$, the number of maximal faces of $\Gamma$ is at least $\frac{area(M)}{2A}$. Our next goal is to show that the boundary of every maximal face of $\Gamma$ intersects $\Gamma'$ at least twice. Together with the fact that any intersection point of $\Gamma$ and $\Gamma'$ lies on the boundary of exactly two faces of $\Gamma$ (due to our assumption, that the intersection points are not vertices) this will conclude the proof. 
	
Let $G$ be a maximal face of $\Gamma$, then, there exists a face $G'$ of $\Gamma'$ that intersects the boundary of $G$, $\partial G\cap G'\neq\emptyset$ (otherwise $ \partial G \subset \Gamma' $, in particular $ \# (\Gamma \cap \Gamma') = \infty $, and we are done). We also claim that $ \partial G \cap (M \setminus \overline{G'}) \neq \emptyset $. Indeed, otherwise we have $ \partial G \subset \overline{G'} $, and since $ M \setminus \overline{G'} $ is connected (recall that $ G' \subset M $ is an open topological disc with a piecewise smooth boundary), we have either $ G \supset M \setminus \overline{G'} $ or $ G \subset \overline{G'} $. 
The first option is impossible since $ area(G), area(G') < A < \frac{area(M)}{2} $, and in the second option we get $ G \subset G' $ (since $ G' $ has a piecewise smooth boundary, $ \partial G' $ does not contain interior points of $ \overline{G'} $) which contradicts the maximality of $ G $.	

Hence we conclude that $ \partial G \cap G' \neq \emptyset $ and $ \partial G \cap (M \setminus \overline{G'}) \neq \emptyset $. Since the boundary $ \partial G $ is a simple closed curve, we get $ \# (\partial G \cap \partial G') \geq 2 $.
	
\end{proof}

\begin{proof}[Proof of Lemma~\ref{lem:bound_no_intersections}]
	
By our assumptions, the covers $ \cU $ and $ \cV $ are in generic position. Therefore, we can slightly enlarge the $ U_i $'s and $ V_j $'s, to obtain:

\begin{itemize}
\item[(a)] After the perturbation, $ \partial U_i $ and $ \partial U_j $ intersect transversally for all $ i,j \in I $, $ i \neq j $, and $ \partial V_i $ and $ \partial V_j $ intersect transversally for all $ i,j \in J $, $ i \neq j $.
\item[(b)] The perturbation did not change the intersection points of $ \partial U_i $ with $ \partial V_j $ for every $ i \in I $ and $ j \in J $. In particular, the covers $ \cU $ and $ \cV $ remain to be in generic position after the perturbation.
\item[(c)]  After the perturbation, each of the $ U_i $'s and $ V_j $'s is still compactly contained in a topological disc of area not greater than $ A $.

\end{itemize}
	
In view of that, without loss of generality we can assume from the beginning that the above property (a) is satisfied. Moreover, for the sake of convenience we assume that $ I = \{1,2, \ldots, | I | \} $ and $ J = \{ 1,2,\ldots, |J| \} $.
	
	Now let $\alpha\in S_I$, $\beta\in S_J$ be permutations on the elements of $I,J$ respectively, and consider the unions of curves defined by
	\begin{eqnarray*}
		\Gamma_\alpha &:=& \bigcup_{i\in I} \left(\partial U_{\alpha(i)}\cap U_{\alpha(i-1)}^c\cap\cdots \cap U_{\alpha(1)}^c\right),\\
		\Gamma_\beta' &:=& \bigcup_{j\in J} \left(\partial V_{\beta(j)}\cap V_{\beta(j-1)}^c\cap \cdots \cap V_{\beta(1)}^c\right).
	\end{eqnarray*}
	Let us show that $\Gamma_\alpha$ is an $A$-division of $M$. First, by the property (a), each connected component of $ M \setminus \Gamma_\alpha $ is an open set with a piecewise smooth boundary. Let $P\subset M\setminus\Gamma_\alpha$ be a connected component and assume for the sake of contradiction that $P$ is not compactly contained in any topological disc of area not greater than $A$. Notice that this assumption implies that $P\nsubseteq U_i$ for all $i$, since every set $U_i$ is compactly contained in a topological disc of area not greater than $A$. We show by induction on $i\in I$ that in this case $P\subset U_{\alpha(1)}^c\cap\cdots\cap U_{\alpha(i)}^c$ for all $i$, which immediately leads to a contradiction, as $\cap_{i\in I}U_{\alpha(i)}^c=\emptyset$. Starting with $i=1$, notice that $\partial U_{\alpha(1)} \subset \Gamma_\alpha$. Therefore, $P\cap \partial U_{\alpha(1)}\subset P\cap\Gamma_\alpha=\emptyset$, and since $P\nsubseteq U_{\alpha(1)}$, we conclude that $P\subset U_{\alpha(1)}^c$. Assuming $P\subset U_{\alpha(1)}^c\cap\cdots\cap U_{\alpha(i-1)}^c$, let us show that $P\subset U_{\alpha(i)}^c$. Indeed, since $\partial U_{\alpha(i)}\cap U_{\alpha(i-1)}^c\cap\cdots\cap U_{\alpha(1)}^c\subset \Gamma_\alpha$ and  $P\subset U_{\alpha(1)}^c\cap\cdots\cap U_{\alpha(i-1)}^c$ we conclude that $P\cap\partial U_{\alpha(i)}\subset P\cap \Gamma_\alpha=\emptyset$. Together with the fact that   $P\nsubseteq U_{\alpha(i)}$, this implies $P\subset U_{\alpha(i)}^c$ as required. 
	
	Similarly, one can show that $\Gamma_\beta'$ is also an $A$-division of $M$. The fact that the covers $\mathcal U$ and $\mathcal V$ are in generic position guarantees that no vertex of $\Gamma_\alpha$ lies on $\Gamma_\beta'$ and vice versa.
	Therefore, we may apply Lemma~\ref{lem:divisions_intersection} and conclude a lower bound for the number of intersection points of $\Gamma_\alpha$ and $\Gamma_\beta'$:
	\begin{equation}\label{eq:Gamma_bnd}
	\#\Gamma_\alpha\cap\Gamma_\beta'\geq \frac{area(M)}{2A}\ .
	\end{equation} 
	Clearly, $\Gamma_\alpha\subset \cup_{i\in I} \partial U_i$ and $\Gamma_\beta'\subset\cup_{j\in J} \partial V_j$, and hence $\Gamma_\alpha\cap\Gamma_\beta'\subset \cup_{i,j}(\partial U_i\cap\partial V_j)$. Take a point $x\in \cup_{i,j}(\partial U_i\cap\partial V_j)$ and let us count the number of permutations $\alpha\in S_I$, $\beta\in S_J$ for which $x\in \Gamma_\alpha\cap\Gamma_\beta'$. Let $i\in I$ such that $x\in \partial U_i$, then $x\in \Gamma_\alpha$ only if $\alpha^{-1}(i)<\alpha^{-1}(k)$ for any $k\in I $ such that $x\in U_k$. By our assumption, the number of indices $k\in I$ for which $x\in U_k$ is at least $L$. By symmetry reasons, the number of permutations $\sigma =\alpha^{-1}$ for which $\sigma(i)<\sigma(k)$ for at least $L$ indices $k\in I$ is at most $|I|!/(L+1)$. Similarly, the number of permutations $\beta$ for which $x\in\Gamma_\beta$ is at most $|J|!/(L+1)$. As a consequence, the number of intersection points in $\cup_{i,j}(\partial U_i\cap\partial V_j)$ can be bounded by averaging inequality~(\ref{eq:Gamma_bnd}) over all permutations  $\alpha\in S_I$ and $\beta\in S_J$:
	\begin{eqnarray*}
		\#\cup_{i,j}(\partial U_i\cap\partial V_j) &\geq& \frac{L+1}{|I|!} \sum_{\alpha\in S_I} \frac{L+1}{|J|!} \sum_{\beta\in S_J} \#\Gamma_\alpha\cap\Gamma_\beta'\\
		&\geq& (L+1)^2\cdot \frac{1}{|I|!} \sum_{\alpha\in S_I} \frac{1}{|J|!} \sum_{\beta\in S_J} \frac{area(M)}{2A}\\
		&=& (L+1)^2\cdot \frac{area(M)}{2A} > L^2\cdot \frac{area(M)}{2A}\ .
	\end{eqnarray*}
\end{proof}

\begin{proof}[Proof of Theorem~\ref{thm:gen_cover}]
	Given $L\in \N$ sufficiently large, we wish to use the functions $\{f_i\}_{i\in I}$, $\{g_j\}_{j\in J}$ to construct covers that satisfy the assumptions of Lemma~\ref{lem:bound_no_intersections}. For every $i\in I$ and $j\in J$ pick $m_i,n_j\in \N$ such that $\frac{m_i}{L}>\max_M f_i$ and $\frac{n_j}{L}>\max_M g_j$. For any $i\in I$ and $1\leq k\leq m_i$ consider the interval $\cI_{i,k}:=\big[\frac{k-1}{L},\frac{k}{L}\big]$ and denote by $s_{i,k}\in\cI_{i,k}$ an independent variable. We think of $s_{i,k}$ as representing a value of the function $f_i$. We equip the interval $\cI_{i,k}$ with the normalized Lebesgue measure $\mu_{i,k}:= Lds_{i,k}$. Similarly, for $j\in J$ and $1\leq \ell\leq n_j$, consider the interval $\cJ_{j,\ell}:= \big[\frac{\ell-1}{L}, \frac{\ell}{L} \big]$ and let $t_{j,\ell}\in \cJ_{j,\ell}$ be an independent variable. We think of $t_{j,\ell}$ as representing a value of the function $g_j$, and equip $\cJ_{j,\ell}$ with the normalized Lebesgue measure $\nu_{j,\ell}:=Ldt_{j,\ell}$. Denote by $\cC:= \prod_{i\in I}\prod_{1\leq k\leq m_i} \cI_{i,k}$, $\cD:= \prod_{j\in J}\prod_{1\leq \ell\leq n_j} \cI_{j,\ell}$  the products of the intervals, then $\cC\subset \R^m$, $\cD\subset \R^n$ for $m:=\sum_{i\in I} m_i$ and $n:=\sum_{j\in J} n_j$. For  ${\bf s}:= (s_{i,k})_{i,k}\in\cC$ and ${\bf t}:= (t_{j,\ell})_{j,\ell}\in\cD$, consider the open sets 
	\begin{eqnarray*}
		U_{i,k}^{\bf s} &:=& U_{i,k}(s_{i,k}) = \{f_i>s_{i,k}\}\quad  1\leq k\leq m_i,\ i\in I,\\
		V_{j,\ell}^{\bf t} &:=& V_{j,\ell}(t_{j,\ell}) = \{g_j>t_{j,\ell}\}\quad\  1\leq \ell \leq n_j,\ j\in J.
	\end{eqnarray*}
	Note that when $L$ is sufficiently large, $\cU^{\bf s}:=\{U_{i,k}^{\bf s}\}_{i,k}$ and $\cV^{\bf t}:= \{V_{j,\ell}^{\bf t}\}_{j,\ell}$ are open covers of $M$, for any ${\bf s}\in \cC$ and ${\bf t}\in \cD$. Let us show that these covers satisfy the assumptions of Lemma~\ref{lem:bound_no_intersections}. Let $x\in M$, then for every $i\in I$,
	\begin{eqnarray*}
		\#\{1\leq k\leq m_i: x\in U_{i,k}^{\bf s}\} &=&\#\{1\leq k\leq m_i: f_i(x)>s_{i,k}\}\\
		&\geq& \#\big\{1\leq k\leq m_i: f_i(x)>\frac{k}{L}\big\}\\
		&\geq& Lf_i(x)-1.
	\end{eqnarray*}
	Therefore, the number of sets in  $\cU^{\bf s}$ covering $x$ is at least $\sum_{i\in I} (Lf_i(x)-1)  = L - |I|>L-|I|-|J|$. Similarly, the number of sets 
	in $\cV^{\bf t}$ covering $x$ is at least $L-|J|>L-|I|-|J|$. 
	In addition, we claim that for almost all ${\bf (s,t)}\in \cC\times \cD$ (namely, except for a set of measure zero) the covers  $\cU^{\bf s}$ and $\cV^{\bf t}$ are in generic position. 
	Indeed, by Sard's theorem, for almost all  ${\bf (s,t)}\in \cC\times \cD$, $(s_{i,k}, t_{j,\ell})$ is a regular value of the map $M\rightarrow\R^2$, $x\mapsto (f_i(x), g_j(x))$ for 
	all $i$, $k$, $j$ and $\ell$. In particular, for such ${\bf (s,t)}$, the boundaries $\partial U_{i,k}^{\bf s}$ and $\partial V_{j,\ell}^{\bf t}$ intersect transversely at a finite 
	number of points. Therefore, by restricting the set of ${\bf (s,t)}$ slightly further, we can guarantee that the covers $\cU^{\bf s}$ and $\cV^{\bf t}$ are in generic 
	position. Recalling that $U_{i,k}^{\bf s} = \{f_i>s_{i,k}\}\subset supp(f_i) \subset U_i$, we conclude that each $ U_{i,k}^{\bf s} $ lies in a topological disc of area not 
	greater than $ A $. Similarly, each $ V_{j,\ell}^{\bf t} $ lies in a topological disc of area not greater than $ A $. This completes the assumptions 
	of Lemma~\ref{lem:bound_no_intersections}, and applying it for $\hat L:=L-|I|-|J|$ and almost every ${\bf (s,t)}$, we obtain 
	\begin{equation*}
	\#\cup_{i,k,j,\ell} (\partial U_{i,k}^{\bf s}\cap \partial V_{j,\ell}^{\bf t})\geq \hat L^2\cdot \frac{area(M)}{2A}. 
	\end{equation*}
	
	Averaging the above inequality over  ${\bf (s,t)}\in \cC\times \cD$ with respect to the normalized product measure $\mu\times\nu$ where 
	$\mu:= \prod_{i,k}\mu_{i,k}$ and $\nu:=\prod_{j,\ell}\nu_{j,\ell}$ we obtain
	\begin{eqnarray}\label{eq:sum_of_intersections}
	\frac{\hat L^2\cdot area(M)}{2A} &\leq& \int \#\cup_{i,k,j,\ell} (\partial U_{i,k}^{\bf s}\cap \partial V_{j,\ell}^{\bf t}) \ d\mu({\bf s})\, d\nu({\bf t}) \nonumber\\
	&\leq&  \int \sum_{i,k,j,\ell} \#(\partial U_{i,k}^{\bf s}\cap \partial V_{j,\ell}^{\bf t}) \ d\mu({\bf s})\, d\nu({\bf t})\nonumber\\
	&=& \sum_{i,k,j,\ell} \int_{\frac{k-1}{L}}^{\frac{k}{L}} \int_{\frac{\ell-1}{L}}^{\frac{\ell}{L}}\#(\partial U_{i,k}^{\bf s}\cap \partial V_{j,\ell}^{\bf t}) \ d\mu_{i,k}(s_{i,k})\, d\nu_{j,\ell}(t_{j,\ell})\nonumber\\
	&=& L^2 \sum_{i,k,j,\ell} \int_{\frac{k-1}{L}}^{\frac{k}{L}} \int_{\frac{\ell-1}{L}}^{\frac{\ell}{L}}\#(\partial U_{i,k}^{\bf s}\cap \partial V_{j,\ell}^{\bf t}) \ ds_{i,k}\, dt_{j,\ell}.
	\end{eqnarray}
		
	For any values of $ s_{i,k} $ and $ t_{j,\ell} $, we have $ \partial U_{i,k}^{\bf s} = \partial \{ f_i > s_{i,k} \} \subset f_i^{-1}(s_{i,k}) $ and $ \partial V_{j,\ell}^{\bf t} = \partial \{ g_j > t_{j,\ell} \} \subset g_j^{-1}(t_{j,\ell}) $. Hence from ($\ref{eq:sum_of_intersections}$) we conclude 
	
         \begin{eqnarray}\label{eq:sum_of_intersections-2}
	\frac{\hat L^2\cdot area(M)}{2A} &=& \frac{(L-|I|-|J|)^2 \cdot area(M)}{2A}\\ &\leq& L^2 \sum_{i,k,j,\ell} \int_{\frac{k-1}{L}}^{\frac{k}{L}} \int_{\frac{\ell-1}{L}}^{\frac{\ell}{L}}\#(f_i^{-1}(s_{i,k}) \cap g_j^{-1}(t_{j,\ell})) \ ds_{i,k}\, dt_{j,\ell}.\nonumber
	\end{eqnarray}
	
	Now we wish to use Lemma~\ref{lem:pb_to_intersections} in order to obtain a lower bound for the Poisson brackets of the functions. Denote $\Phi_{i,j}:=(f_i, g_j):M\rightarrow \R^2$ and set $\Omega_{k,\ell}:=\big(\frac{k-1}{L}, \frac{k}{L}\big)\times \big(\frac{\ell-1}{L}, \frac{\ell}{L}\big)\subset \R^2$.  Applying Lemma~\ref{lem:pb_to_intersections} to each term of the sum in (\ref{eq:sum_of_intersections-2}), we obtain
	\begin{eqnarray*}
		\frac{(L-|I|-|J|)^2  \cdot area(M)}{2A} &\leq&  L^2 \sum_{i,k,j,\ell} 
		\int_{\Omega_{k,\ell}}\#(f_i^{-1}(s_{i,k}) \cap g_j^{-1}(t_{j,\ell})) \ ds_{i,k}\, dt_{j,\ell}\\
		&=& L^2 \sum_{i,k,j,\ell} \int_{\Phi_{i,j}^{-1}(\Omega_{k,\ell})} |\{f_i, g_j\}|\ \omega\\
		&\leq& L^2\sum_{i,j} \int_M  |\{f_i, g_j\}|\ \omega,
	\end{eqnarray*}
	where in the last inequality we use the fact that the domains $\{\Omega_{k,\ell}\}_{k,\ell}$ are disjoint, and so are their pre-images under $\Phi_{i,j}$ for fixed $i,j$ (in fact, it follows from the proof of Lemma~\ref{lem:pb_to_intersections} that this inequality is an equality, since the union of the $ \Omega_{k,\ell} $'s contains $ \Phi_{i,j}(M) $ up to a set of measure zero). 
	We conclude that 
	\begin{eqnarray*}
		\sum_{i,j} \int_M  |\{f_i, g_j\}|\ \omega&\geq& \frac{(L-|I|-|J|)^2}{L^2}\cdot \frac{ area(M)}{2A} \\
		&\underset{L\rightarrow\infty}{\longrightarrow}& \frac{ area(M)}{2A}
	\end{eqnarray*}
	which proves the claim.
\end{proof}

\begin{rem} \label{rem:equivalence}
Let us comment about equivalence of Theorems \ref{thm:gen_cover} and \ref{thm:gen_cover}' (cf. Remark \ref{rem:not-ness-discs}). Due to Remark \ref{rem:energy_vs_area}, in case when the partitions of unity $ \cU $ and $ \cV $ consist of connected open sets, the statement of Theorem \ref{thm:gen_cover}' is equivalent to Theorem \ref{thm:gen_cover}. If, however, not all the elements of the covers $ \cU $ and $ \cV $ are connected, then the statement of Theorem \ref{thm:gen_cover}' still follows from Theorem \ref{thm:gen_cover}. 

Indeed, consider the collection $ \widehat \cU = \{ \widehat U_i \}_{i \in \hat I} $ of all connected components of all the $ U_i $'s, and similarly, consider the collection $ \widehat \cV = \{ \widehat V_j \}_{j \in \hat J} $ of all connected components of all the $ V_j $'s. Given any partition of unity $ \cF = \{ f_i \}_{i \in I} $, subordinate to $ \cU $, and a partition of unity $ \cG = \{ g_j \}_{j \in J} $, subordinate to $ \cV $, we naturally get a partition of unity $ \widehat \cF = \{ \hat f_i \}_{i \in \hat I} $ subordinate to $ \cU $ and a partition of unity $ \widehat \cG = \{ \hat g_j \}_{j \in \hat J} $ subordinate to $ \widehat \cV $, as follows. For every $ \widehat U_k $ being a connected component of $ U_i $, we set $ \hat f_k = f_i \id_{\widehat U_k}$, where $ \id_{\widehat U_k} $ is the characteristic function of $ \widehat U_k $ on $ M $. The description of the partition of unity $ \widehat \cG $ is similar. Of course, in general the covers $ \widehat \cU $ and $ \widehat \cV $ might be infinite, but since the functions $ f_i $ and $ g_j $ have compact support, it follows that $ \hat f_i $ and $ \hat g_j $ are non-trivial only for a finite number of $ i \in \hat I $ and $ j \in \hat J $. We have $ \sum_{i \in \hat I,j \in \hat J} | \{\hat f_i,\hat g_j\} | = \sum_{i \in I,j \in J} | \{f_i,g_j\} | $ on $ M $. This reduces proving the statement of the theorem for the covers $ \cU, \cV $, to proving it for the covers $ \widehat \cU, \widehat \cV $ consisting of connected open sets. 
\end{rem}

\begin{rem}
	In fact, the following more generalized formulation of Theorem~\ref{thm:gen_cover} holds. Let $ M $ be a closed and connected surface endowed 
	with an area form $ \omega $, and let $\{f_i\}_{i\in I}$,  $\{g_j\}_{j\in J}$ be smooth functions on $ M $, such that for some real number 
	$ 0 < A < area(M)/2 $ we have
	\begin{enumerate}
		\item[(1)] The support of each $ f_i $ lies in some topological disc of area not greater than $ A $, and 
		similarly, the support of each $ g_j $ lies in some topological disc of area not greater than $ A $. 
		\item[(2)] $\sum_{i\in I} |f_i|\geq 1$ and $\sum_{j\in J} |g_j|\geq 1$.  
	\end{enumerate}
	Then, (\ref{eq:gen_cover}) holds for $\{f_i\}$, $\{g_j\}$.
	
	To see this, notice first that the proof of Theorem~\ref{thm:gen_cover} holds for non-negative functions with the above properties. Therefore, given arbitrary functions $\{f_i\}$, $\{g_j\}$ that satisfy these conditions, one can construct non-negative functions in the following way. Fix $\delta>0$ sufficiently small and let $\rho:\R\rightarrow [0,\infty)$ be a smooth even function satisfying
	\begin{itemize}
		\item $\rho(t) = 0$ for $t\in [-\delta, \delta]$,
		\item $\rho(t)\geq |t|-2\delta$ for all $t\in \R$,
		\item $\rho'(t)\leq 1$ for all $t\in\R$. 
	\end{itemize}   
	Setting $\tilde f_i:= (1-2|I|\delta)^{-1}\cdot \rho\circ f_i$ and $\tilde g_j:= (1-2|J|\delta)^{-1}\cdot \rho\circ g_j$, they are clearly non-negative and they are supported in $U_i$ and $V_j$ respectively. In addition, for any $x\in M$,
	\begin{eqnarray*}
		\sum_{i\in I} \tilde f_i (x) &=& (1-2|I|\delta)^{-1} \sum_{i\in I} \rho\circ f_i\\
		&\geq&  (1-2|I|\delta)^{-1} \sum_{i\in I} (|f_i| - 2\delta) \\
		&\geq&  (1-2|I|\delta)^{-1} (1-2|I|\delta) = 1.
	\end{eqnarray*}
	Similarly, $\sum_j \tilde g_j(x)\geq 1$ and hence we may apply Theorem~\ref{thm:gen_cover} to the functions $\{\tilde f_i\}_i$, $\{\tilde g_j\}_j$ and conclude 
	\begin{eqnarray*}
		\frac{ area(M)}{2A} &\leq& \sum_{i,j} \int_M  |\{\tilde f_i, \tilde g_j\}|\ \omega\\
		&=& (1-2|I|\delta)(1-2|J|\delta)\sum_{i,j} \int_M  |\{\rho\circ f_i, \rho\circ g_j\}|\ \omega \\
		&\leq& (1-2|I|\delta)(1-2|J|\delta)\sum_{i,j} \int_M  |\{f_i, g_j\}|\ \omega.
	\end{eqnarray*} 
	Taking $\delta\rightarrow 0$ we obtain (\ref{eq:gen_cover}).	
\end{rem}

Let us explain how to deduce Corollary~\ref{cor:c0_bound} from Theorem~\ref{thm:gen_cover}.
\begin{proof}[Proof of Corollary~\ref{cor:c0_bound}]
	Applying Theorem~\ref{thm:gen_cover}' for $\cU = \cV$ and $\{f_i\} = \{g_j\}$ we obtain
	\begin{equation*}
	\int_M\sum_{i,j\in I} |\{f_i, f_j\}|\ \omega \geq \frac{area(M)}{2e(\cU)}
	\end{equation*}
	The Poisson bracket of two functions $\{f_i,f_j\}$ is supported in the intersection of their supports $supp(f_i)\cap\supp(f_j)\subset U_i\cap U_j$. Hence, given $x\in M$, the function $\{f_i,f_j\}$ does not vanish at $x$ only if $x\in U_i\cap U_j$. Therefore, by the definition of $d$, the number of non-vanishing terms in the sum $\sum_{i,j} |\{f_i,f_j\}|(x)$ is at most $d^2$. We conclude that  
	\begin{equation*}\label{eq:bound_degree}
	\int_M \sum_{i,j\in I}|\{f_i, f_j\}|\ \omega \leq  area(M) \max_M \sum_{i,j\in I} |\{f_i, f_j\}| 
	\leq d^2 area(M) \max_{i,j\in I} \|\{f_i, f_{j}\}\|.
	\end{equation*}
\end{proof}

The following example shows that the bounds appearing in Theorem~\ref{thm:essential} and Corollary \ref{cor:pb_bound_gen} are sharp.

\begin{example}\label{exa:essential_sharp} 
	Let $(M,\omega)$ be a closed and connected symplectic surface. In order to demonstrate the sharpness of the bounds presented in Theorem~\ref{thm:essential} and 
	Corollary \ref{cor:pb_bound_gen}, let us construct a family of open covers $\{\mathcal U^\epsilon\}_{\epsilon>0}$ of $M$ by topological discs, such that 
	$pb(\mathcal U^\epsilon)\leq C/\epsilon^2$, $|I_{ess}(\mathcal U^\epsilon)|\geq c/\epsilon^2$ and $ c \epsilon^2 \leq area(U_i)\leq C \epsilon^2 $, for some constants $0< c < C<\infty$. 
	
	During our construction below, $ c $ and $ C $ will denote two constants, that can in principle vary from time to time. At the end of the construction we set the value 
	of $ c $ to be the minimal, and the value of $ C $ to be the maximal, among all possible values of $ c $ and of $ C $ that appeared in the construction, respectively. 
	For the sake of convenience, the euclidean norm on $ \R^2 $ is denoted by $ | \cdot | $. Throughout the construction, we use the convenient notations for a pushforward 
	and a pullback of a map\footnote{Assume that we are given a map $ \phi : A \rightarrow B $. Then for any subset $ C \subset B $ and a map 
	$ f : C \rightarrow D $, the {\it pullback} of $ f $ by $ \phi $ is the map $ \phi^* f = f \circ \phi : \phi^{-1}(C) \rightarrow D $. If in addition, $ \phi $ is injective, then for 
	any $ C \subset A $ and a map $ g : C \rightarrow D $, the {\it pushforward} of $ g $ by $ \phi $ is the map $ \phi_* f = f \circ \phi^{-1} : \phi(C) \rightarrow D $.}.
	
	Consider a cover of $ M $ by Darboux charts $ \phi_i : W_i \rightarrow U_i \subset M $, $ i=1,\ldots, m $, where $ W_i \subset \mathbb{R}^2 $ is an open set. 
	For each $ i $, choose $ W_i' \Subset W_i $, such that $ \{ W_i' \}  $ is still a cover of $ M $, and moreover for some topological disc $ D \subset W_1' $ we 
	have $ \phi_1(D) \cap \phi_i(W_i') = \emptyset $ for all $ i \neq 1 $. 
	
	Let $ h : \R^2\rightarrow\R$ be a smooth non-negative bump function compactly supported in $ \R^2 $ such that:
	\begin{itemize}
		\item The integer translations of $\{ h > 0 \} = h^{-1}((0,\infty)) $ cover $\R^2$, namely, setting $ h_\tau(z):= h(z -\tau)$, $ \tau \in \Z^2 $, we 
		have $\cup_{\tau \in \Z^2}\ \{ h_\tau > 0 \} = \R^2$. 
		\item There exists a topological disc $ V \supset supp(h) $, such that the integer translations $ \{ V+ \tau \}_{\tau \in \Z^2} $ form a minimal cover of $ \R^2 $.
	\end{itemize} 
	Further, denote $ h_{\epsilon,\tau} (z) = h_\tau(z/ \epsilon) = h(z/\epsilon-\tau) $ and $ V_{\epsilon,\tau} = \epsilon \tau + \epsilon V \subset \R^2 $ for every $ \epsilon > 0 $ and $ \tau \in \Z^2 $. Note that $ | \nabla h_{\epsilon,\tau} | \leqslant C/\epsilon $ on $ \mathbb{R}^2 $.  
	
	Let $ \epsilon > 0 $ be small enough. Consider the functions $ g = (\phi_i)_* h_{\epsilon,\tau} $, for those $ \tau \in \Z^2 $ when the support of $ h_{\epsilon,\tau} $ is contained in $ W_i' $, where by extending by $ 0 $ we view each such $ g $ as a function $ g : M \rightarrow \R $. For every such $ g $, denote $ U_g := \phi_i(V_{\epsilon,\tau}) $. Collect all such functions $ g $ for all $ 1 \leqslant i \leqslant m $, and denote them by $ g_1, \ldots , g_N $, also setting $ U_i := U_{g_i} $. By smoothness of $ \phi_i $ and since $ W_i' \Subset W_i $ for each $ i $, we have $ | \nabla \phi_i^*g_j | \leqslant C/\epsilon $ each time when the support of $ g_j $ intersects $ \phi_i(W_i') $. 
	
	Denote $ G := \sum_{i=1}^N g_i $. For small $ \epsilon $ we have $ 0 < c \leqslant G \leqslant C $. Moreover, since for each $ p \in M $ all but at most $ C $ functions among $ g_1, \ldots, g_N $ vanish on a neighbourhood of $ p $, we conclude that 
	for any choice of $ x_1, \ldots, x_N \in [-1,1] $ we have $ | \nabla \phi_i^* (x_1 g_1 + \ldots + x_N g_N) | \leqslant C / \epsilon $ on $ W_i' $. In particular, $ | \nabla \phi_i^* G | \leqslant C / \epsilon $ on $ W_i' $.  Denoting $ f_i = g_i / G $, we conclude that 
	for any choice of $ x_1, \ldots, x_N \in [-1,1] $ we have 
	\begin{gather*}
	| \nabla \phi_i^* (x_1 f_1 + \ldots + x_N f_N) |  \\
	\leqslant | \nabla \phi_i^* (x_1 g_1 + \ldots + x_N g_N) |/ \phi_i^*G + |x_1 g_1 + \ldots + x_N g_N| \cdot |\nabla \phi_i^*G |/(\phi_i^*G)^2 \\ 
	\leqslant C / \epsilon 
	\end{gather*}
	on $ W_i' $.
	
	We claim that $ \mathcal U := \{ U_i \}_{i=1, \ldots, N} $ is a desired cover of $ M $, when $ \epsilon > 0 $ is small enough. First, it is a cover of $ M $ since $ \{ W_i' \} $ is a cover of $ M $.
	Second, clearly $ c \epsilon^2 \leq area(U_i)\leq C \epsilon^2 $ for all $ i $. Since for the topological disc $ D \subset W_1' $ we have $ \phi_1(D) \cap \phi_i(W_i') = \emptyset $ for all $ i \neq 1 $, and since $ \{ V_{\epsilon,\tau} \}_{\tau \in \Z^2} $ is a minimal cover of $ \R^2 $, it follows that $|I_{ess}(\mathcal U^\epsilon)|\geq c/\epsilon^2$. Finally, to show that $pb(\mathcal U)\leq C/\epsilon^2$, we use the subordinate to $ \mathcal U $ 
	partition of unity $ f_1, \ldots, f_N $ that we constructed. Let $ x=(x_1,\ldots,x_N), y = (y_1, \ldots,y_N) \in [-1,1]^N $. Given any $ p \in M $, choose $ i $ such that $ p \in W_i' $, and put $ z = \phi_i^{-1}(p) $. We have
	\begin{gather*}
	| \{ x_1 f_1 + \ldots + x_N f_N, y_1 f_1 + \ldots + y_N f_N \}(p) | \\
	= | \{ \phi_i^* (x_1 f_1 + \ldots + x_N f_N) , \phi_i^* (y_1 f_1 + \ldots + y_N f_N) \}(z) | \\
	\leqslant | \nabla \phi_i^*(x_1 f_1 + \ldots + x_N f_N)| \cdot | \nabla \phi_i^* (y_1 f_1 + \ldots + y_N f_N) | \leqslant C/\epsilon^2. 
	\end{gather*}
	This implies that $ \| \{ x_1 f_1 + \ldots + x_N f_N, y_1 f_1 + \ldots + y_N f_N \} \| \leqslant C/ \epsilon^2 $ for all $ x,y \in [-1,1]^N $, so we get $pb(\mathcal U)\leq C/\epsilon^2$.
\end{example}

\section{Poisson brackets of small covers.}
As Leonid Polterovich explained to us, Proposition~\ref{pro:pb_c0} is a surprising application of the $C^0$-rigidity of Poisson bracket.

\begin{proof}[Proof of Proposition~\ref{pro:pb_c0}]
	For any large number $R>0$, fix functions $g,h:M\rightarrow[0,1]$ such that $\|\{g,h\}\|\geq R$. Let $\ell>0$ be larger than the Lipschitz constants of both $g$ and $h$ (with respect to the metric $\rho$). 
	Let $\cU^\epsilon:=\{U_i\}_{i=1}^N$ be a finite open cover such that the diameter of each $U_i$ is less than $\epsilon$. For each $i\in\{1,\dots,N\}$, pick $z_i \in U_i$, and notice that for every point $z\in U_i$, $|g(z)-g(z_i)|< \ell\epsilon$ and $|h(z)-h(z_i)|<\ell\epsilon$. Let $\mathcal F=\{f_i\}_{i=1}^N$ be any partition of unity subordinate to $\mathcal U^\epsilon$ and put $x:=(g(z_1),\dots,g(z_N))\in[0,1]^N$, $y:=(h(z_1),\dots,h(z_N))\in[0,1]^N$. Then, the functions $\sum_i x_i f_i$, $\sum_j y_j f_j$ are $\epsilon$-close to $g,h$ respectively: For any point $z\in M$, $|g(z) - \sum_i x_i f_i(z)| = |\sum_{i=1}^N (g(z)-g(z_i))f_i(z)|$. Since $f_i(z)=0$ whenever $z\notin U_i$ and $|g(z)-g(z_i)|< \ell\epsilon$ whenever $z\in U_i$, we have $|g(z) - \sum_i x_i f_i(z)|\leq \sum_i |g(z)-g(z_i)|f_i(z)\leq \sum_{i=1}^N \ell\epsilon f_i(z) = \ell\epsilon$. Similarly, $|h(z) - \sum_j y_j f_j(z)|\leq\ell\epsilon$ and hence the functions  $\sum_i x_i f_i$, $\sum_j y_j f_j$ converge uniformly to $g,h$ respectively when $\epsilon\rightarrow0$. The $C^0$-rigidity of Poisson brackets guarantees that for small enough $\epsilon$, $\|\{\sum_i x_i f_i(z), \sum_j y_j f_j(z)\}\|\geq R/2$. This argument holds for any subordinate partition of unity $\mathcal F$, and therefore, $pb(\mathcal U^\epsilon)\geq R/2$.
\end{proof}\

\begin{proof}[Proof of Theorem~\ref{thm:rate}]

        Let us extend a bit the notion of the Poisson bracket invariant of a cover. Given a symplectic manifold $ (M,\omega) $, a compact subset $ K \subset M $, and a finite open cover $ \cU = \{ U_i \}_{i \in I} $ of $ K $,
we define 
\begin{equation*}\label{eq:pbrel_def}
pb(\mathcal{U};K):= \inf_{\mathcal F} \max_{x,y\in[-1,1]^{|I|}} \Big\|\Big\{\sum_{i\in I} x_i f_i,\sum_{j \in I} y_j f_j\Big\}\Big\|,
\end{equation*}
where the infimum is taken over all collections $ \mathcal F = \{ f_i \}_{i \in I} $ of smooth non-negative functions satisfying $ \sum_{i \in I} f_i = 1 $ on $ K $.

	Now consider $\R^{2n}$ with the standard euclidean metric $\rho_0$, the standard symplectic form $\omega_0$ and the standard symplectic coordinates $q_1,p_1,\dots, q_n, p_n$. Let $B\subset\R^{2n}$ be the closed unit ball and let $\beta:B\rightarrow [0,1]$ be a radial bump function $\beta = \beta(r)$ that vanishes in a neighborhood of the boundary $\partial B$ and equals 1 on a neighborhood of the origin. Consider the functions $g:= q_1\cdot \beta$, $h:= p_1\cdot \beta$, then the uniform norm of their Poisson bracket (as functions on $B$) is at least 1,  namely,
	$\|\{g,h\}\| \geq 1$.
	As before, $C^0$-rigidity of Poisson brackets guarantees the existence of a constant $\delta_0>0$ (depending on $g$ and $h$, and therefore on the dimension $2n$) such that for any cover $\mathcal U^{\delta_0}$ of $B$ consisting of open sets of diameter at most $\delta_0$, $pb(\mathcal U^{\delta_0};B)\geq\frac{1}{2} \|\{g,h\}\|\geq\frac{1}{2}$. Without loss of generality we can assume that $ \delta_0 < 1/2 $.
	
	Given any symplectic manifold  $(M,\omega)$  with a compatible Riemannian metric $\rho$, there exists $0<r\leq 1$ and a symplectic embedding $\varphi:rB\rightarrow M$. Clearly, $\varphi^*(\omega) = \omega_0$ and $\varphi^*(\rho)$ is compatible with $\omega_0$. One can check that there exists a linear symplectomorphism $T:\R^{2n}\rightarrow\R^{2n}$ such that $T^*(\varphi^*\rho) (0)=\rho_0$. 
	Denote $\hat\rho:= T^*\varphi^*\rho$, then by decreasing $r$, one can guarantee that 
$ \phi \circ T $ is well defined on $ rB $ and that $\hat{\rho}|_{rB}\geq \frac{1}{2}\rho_0|_{rB}$. 

Let $ 0 < \epsilon < r/4 $. Given an open cover $\mathcal U^\epsilon$ of $M$ consisting of sets of diameter at most $ \epsilon $ with respect to the metric $ \rho $, we denote by $\mathcal V$ the collection of pre-images $ (\phi \circ T)^{-1}(U) $ for all $ U \in \cU $ with $ U \subset \phi \circ T (rB) $. Note that $ \mathcal V $ in particular contains the pre-images by $ \phi \circ T $ of all $ U \in \cU $ intersecting $ \phi \circ T(\frac{r}{2}B) $, hence $pb(\mathcal V; \frac{r}{2}B) \leq pb(\mathcal U^\epsilon)$. In addition, the diameter of every $V \in \mathcal V$ with respect to $\rho_0$ is at most $2\epsilon$. 

Denote by $\psi_c:\R^{2n}\rightarrow\R^{2n}$ the homothetic transformation $\psi_c(x) = cx$. When $\epsilon\leq \delta_0\cdot r/4=: \delta (M,\rho)$, $ \cV' := \{ \psi_{\delta_0/2\epsilon}(V) \, | \, V \in \cV \} $ is an open cover of $ B $, where the diameter of each $ V' \in \cV' $ is at most $\delta_0$ with respect to $\rho_0$. Therefore, by our previous arguments $pb(\cV';B)\geq 1/2$. In addition, for any $c>0$ and any cover $\mathcal U$ of a compact set $ K \subset \mathbb{R}^{2n} $ we have $$ pb(\{ \psi_c(U) \, | \, U \in \cU \}; \psi_c(K)) = 1/c^2 \cdot pb(\mathcal U;K).$$ In particular, 
	\begin{equation*}
	\frac{1}{2}\leq pb(\cV';B) = \frac{4\epsilon^2}{\delta_0^2} pb(\mathcal V; \frac{2\epsilon}{\delta_0} B)\leq 
	\frac{4\epsilon^2}{\delta_0^2} pb(\mathcal V; \frac{r}{2} B) \leq
	\frac{4\epsilon^2}{\delta_0^2} pb(\mathcal U^\epsilon).
	\end{equation*}
	The required bound easily follows.
	
\end{proof}

\appendix
\section{Bounding $pb(\mathcal U)$ by the sum of absolute values of Poisson brackets.} \label{app:lin_exc}

\begin{prop}\label{pro:linear_ineq}
	Let $(\R^{2n},\omega_0)$ be the standard symplectic vector space, and let $v_1,\dots v_N\in \R^{2n}$ be vectors that satisfy $\sum_{i=1}^N \| v_i\|\leq \sum_{i=1}^N \|Sv_i\|$ for all $S\in \operatorname{Sp}(n)$. Then, there exists a constant $c(n)>0$ depending only on the dimension such that 
	\begin{equation}\label{eq:linear1}
	\max_{x,y\in [-1,1]^N}\omega_0\left(\sum_{i=1}^N x_i v_i, \sum_{j=1}^N y_j v_j \right)\geq c(n)\cdot \left(\sum_{i=1}^N \|v_i\|\right)^2 .
	\end{equation}
\end{prop}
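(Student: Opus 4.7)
The plan is to combine two ingredients: an infinitesimal consequence of the balance hypothesis, which will furnish algebraic structure on a certain weighted second-moment matrix of the $v_i$; and the real Grothendieck inequality, which will convert that structure into the desired lower bound for the bilinear form in $x,y$.

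First, I would extract a first-order consequence of the balance. Discarding any $v_i = 0$ (which is harmless), I would differentiate $F(S) := \sum_i \|Sv_i\|$ along one-parameter subgroups $S = e^{tA}$, $A \in \mathfrak{sp}(2n,\R)$. Since $S = I$ minimizes $F$, this gives
\[
\operatorname{tr}(AM) = 0 \quad \text{for all } A \in \mathfrak{sp}(2n,\R), \qquad \text{where} \quad M := \sum_i \frac{v_i v_i^T}{\|v_i\|}.
\]
Using the parametrization $\mathfrak{sp}(2n,\R) = \{-JB : B^T = B\}$, where $J$ is the complex structure with $\omega_0(u,v) = \langle Ju, v\rangle$, this condition is equivalent to $JM = MJ$; in particular $J^T M J = M$.

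Next, I would apply the real Grothendieck inequality to the matrix $P_{ij} := \omega_0(v_i, v_j) = \langle J v_i, v_j\rangle$ with the unit vectors $u_i := J v_i/\|v_i\|$ and $w_j := v_j/\|v_j\|$ in the Hilbert space $\R^{2n}$, yielding
\[
\sum_{i,j}\frac{\omega_0(v_i,v_j)^2}{\|v_i\|\,\|v_j\|} \;=\; \sum_{i,j} P_{ij}\langle u_i, w_j\rangle \;\leq\; K_G \max_{x,y\in[-1,1]^N}\sum_{i,j} P_{ij}\, x_i y_j .
\]
A short direct computation will identify the left-hand sum with $\operatorname{tr}(J^T M J M)$, which by the first step equals $\operatorname{tr}(M^2)$. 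Since $M \succeq 0$ is $2n\times 2n$ with $\operatorname{tr}(M) = \sum_i \|v_i\|$, Cauchy--Schwarz on its eigenvalues gives $\operatorname{tr}(M^2) \geq \operatorname{tr}(M)^2/(2n)$. Combining the steps will produce the proposition with $c(n) = 1/(2 n K_G)$, where $K_G$ is the universal real Grothendieck constant.

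The delicate point is matching up the two ingredients correctly. The balance hypothesis only controls the \emph{weighted} matrix $M = \sum v_i v_i^T/\|v_i\|$, not the plain second moment $N = \sum v_i v_i^T$; in particular a naive Frobenius bound $\|P\|_F^2 = -\operatorname{tr}((JN)^2)$ would involve $N$ and be useless because $N$ need not commute with $J$. Grothendieck's inequality is what gives the flexibility to pick the Hilbert-space lifts $u_i = Jv_i/\|v_i\|$, $w_j = v_j/\|v_j\|$ so that the \emph{weighted} matrix $M$ appears in the computation, after which the commutation $JM = MJ$ from the first step makes the identity $\operatorname{tr}(J^T MJ M) = \operatorname{tr}(M^2)$ go through. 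Without the balance hypothesis the argument truly fails: when the $v_i$ all lie in a Lagrangian subspace every $\omega_0(v_i,v_j)$ vanishes, and it is precisely in such degenerate configurations that the balance inequality itself breaks (a symplectic scaling compressing the Lagrangian makes $\sum \|Sv_i\|$ arbitrarily small).
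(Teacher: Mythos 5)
Your proof is correct, and it takes a genuinely different route from the paper's.

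The paper's argument is geometric and ``finitary'': it covers $\R^{2n}$ by $m(n,\theta)$ cones of small angle $\theta$, selects the cone $C$ carrying the largest share of $\sum_i\|v_i\|$, and tests the balance hypothesis against a single explicit symplectic map $S$ (a factor-$1/2$ compression along the center direction $v$ of $C$, a factor-$2$ stretch along $J_0 v$, identity on the symplectic complement). Estimating $\|Sv_i\|-\|v_i\|$ separately for $v_i\in C$ and $v_i\notin C$ yields $\sum_{v_i\in C}\|v_i\|\le 9\sum_{v_i\notin C}|\omega_0(v,v_i)|$, and a suitable choice of $\pm 1$ signs $x,y$ finishes. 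The resulting constant involves $m(n,\theta)^{-2}$, hence decays exponentially in $n$.

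Your argument instead linearizes the balance hypothesis: differentiating $S\mapsto\sum_i\|Sv_i\|$ along $\mathfrak{sp}(2n,\R)$ at the identity gives $\operatorname{tr}(AM)=0$ for all $A\in\mathfrak{sp}$, with $M=\sum_i v_iv_i^T/\|v_i\|$, and the parametrization $\mathfrak{sp}=\{-JB:B^T=B\}$ turns this into the commutation $JM=MJ$. You then invoke the real Grothendieck inequality with the lifts $u_i=Jv_i/\|v_i\|$, $w_j=v_j/\|v_j\|$ to bound $\operatorname{tr}(JMJ^TM)=\operatorname{tr}(M^2)$ (using the commutation and $J^TJ=\operatorname{id}$) by $K_G\cdot\max_{x,y}\omega_0(\sum x_iv_i,\sum y_jv_j)$, and finish with $\operatorname{tr}(M^2)\ge\operatorname{tr}(M)^2/2n$ and $\operatorname{tr}(M)=\sum_i\|v_i\|$. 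All the steps check out, including the reduction to nonzero $v_i$, the identification of $\max_{x,y\in[-1,1]^N}$ with the sign version used in Grothendieck (the form is bilinear, so the maximum sits at vertices and is nonnegative), and the computation $\sum_{i,j}\omega_0(v_i,v_j)^2/(\|v_i\|\|v_j\|)=\operatorname{tr}(JMJ^TM)$.

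What each buys: the paper's argument is fully elementary and self-contained, at the price of an exponentially poor constant in $n$. Your argument is cleaner and yields a much better constant $c(n)=1/(2nK_G)$ (polynomial in $1/n$), at the price of importing Grothendieck's inequality as a black box. Your closing observation about Lagrangian configurations correctly isolates where the balance hypothesis is indispensable in both arguments.
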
\

\begin{cor}\label{cor:linear}
	Let $v_1,\dots ,v_N\in \R^{2n}$ be any collection of vectors, then 
	\begin{equation}\label{eq:linear2}
	\max_{x,y\in [-1,1]^N}\omega_0\left(\sum_{i=1}^N x_i v_i, \sum_{j=1}^N y_j v_j \right)\geq c(n)\cdot \sum_{i,j=1}^N |\omega_0(v_i, v_j)|. 
	\end{equation}
\end{cor}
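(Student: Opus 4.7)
The plan is to deduce Corollary~\ref{cor:linear} from Proposition~\ref{pro:linear_ineq} by exploiting the fact that both sides of \eqref{eq:linear2} are invariant under the substitution $v_i \mapsto S v_i$ for any $S \in \operatorname{Sp}(n)$: the left-hand side because $\omega_0$ is $\operatorname{Sp}(n)$-invariant, and the right-hand side because each individual pairing $\omega_0(v_i,v_j)$ is preserved. So I would replace the given vectors by $w_i = S v_i$ for a carefully chosen $S$ making the $w_i$'s satisfy the minimizing hypothesis of Proposition~\ref{pro:linear_ineq}, and then combine its conclusion with the elementary pointwise estimate $|\omega_0(u,v)| = |\langle Ju, v\rangle| \leq \|u\|\|v\|$ (valid since $J$ is orthogonal), which gives $\sum_{i,j}|\omega_0(w_i,w_j)| \leq \bigl(\sum_i \|w_i\|\bigr)^2$.

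Concretely, set $m := \inf_{S \in \operatorname{Sp}(n)} \sum_i \|S v_i\|$. If $m = 0$, pick $S_k \in \operatorname{Sp}(n)$ with $\sum_i \|S_k v_i\| \to 0$; then $S_k v_i \to 0$ for every $i$, so $\omega_0(v_i,v_j) = \omega_0(S_k v_i, S_k v_j) \to 0$ by continuity and invariance, all pairings vanish, and \eqref{eq:linear2} is trivial. Otherwise $m > 0$; choose a minimizing sequence $\{S_k\}$, and since $\|S_k v_i\| \leq \sum_j \|S_k v_j\|$ is bounded for each $i$, extract a subsequence along which $S_k v_i \to w_i$ in $\R^{2n}$ for every $i$. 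Then $\sum_i \|w_i\| \leq m$, and for any $T \in \operatorname{Sp}(n)$ we have $\sum_i \|T S_k v_i\| \geq m$ because $T S_k \in \operatorname{Sp}(n)$; passing to the limit yields $\sum_i \|T w_i\| \geq m \geq \sum_i \|w_i\|$, so the $w_i$'s satisfy the hypothesis of Proposition~\ref{pro:linear_ineq}. Applying the proposition, and using the identities $\omega_0(w_i,w_j) = \omega_0(v_i,v_j)$ (which follow from symplectic invariance along the sequence and continuity), we obtain
\[
\max_{x,y \in [-1,1]^N} \omega_0\Bigl(\sum_{i=1}^N x_i v_i, \sum_{j=1}^N y_j v_j\Bigr) = \max_{x,y \in [-1,1]^N} \omega_0\Bigl(\sum_{i=1}^N x_i w_i, \sum_{j=1}^N y_j w_j\Bigr) \geq c(n)\Bigl(\sum_i \|w_i\|\Bigr)^2 \geq c(n)\sum_{i,j}|\omega_0(v_i,v_j)|,
\]
as required.

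The main obstacle is the non-compactness of $\operatorname{Sp}(n)$: the infimum $m$ is in general not attained, so one cannot simply pick a minimizer and reduce to the proposition in one step. The remedy sketched above — taking a coordinate-wise limit $w_i := \lim_k S_k v_i$ of a minimizing sequence — produces vectors that are typically \emph{not} the image of the $v_i$'s under any single symplectic map, yet nonetheless inherit both the correct pairings $\omega_0(w_i,w_j) = \omega_0(v_i,v_j)$ and the $\operatorname{Sp}(n)$-minimizing property required to invoke Proposition~\ref{pro:linear_ineq}. Verifying this limiting argument is the only delicate point; everything else is a direct manipulation.
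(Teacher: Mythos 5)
Your proof is correct, and it takes a genuinely different route from the paper's. The paper first reduces to the case where $v_1,\dots,v_N$ span $\mathbb{R}^{2n}$ (by decomposing $V=\operatorname{span}\{v_i\}$ as $V_S\oplus V_I$ with $V_S$ symplectic and $V_I$ isotropic, then projecting onto $V_S$, which leaves both sides of \eqref{eq:linear2} unchanged). This spanning hypothesis is exactly what makes the quantity $a:=\min_{\|w\|=1}\sum_i|\langle w,v_i\rangle|$ strictly positive, which in turn is used to show that $S\mapsto\sum_i\|Sv_i\|$ is proper and therefore attains a minimum $S_{\min}\in\operatorname{Sp}(n)$; Proposition~\ref{pro:linear_ineq} is then applied to $\{S_{\min}v_i\}$. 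Your argument sidesteps the spanning reduction entirely: you take a minimizing sequence, split into the two cases $m=0$ (where all pairings vanish and the inequality is trivial) and $m>0$ (where the sequence $S_k v_i$ is bounded for each $i$), and extract limit vectors $w_i$. The key observation — which the paper does not need — is that even though the $w_i$'s need not be of the form $Sv_i$ for any single $S\in\operatorname{Sp}(n)$, they still inherit the pairings $\omega_0(w_i,w_j)=\omega_0(v_i,v_j)$ and the minimizing property, because both pass to the limit. Your version is somewhat more self-contained (no isotropic/symplectic decomposition, no properness estimate), at the cost of a limit argument whose correctness one must check carefully; the paper's is more hands-on and produces an explicit minimizer within the $\operatorname{Sp}(n)$-orbit of the original data.
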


\noindent We will first prove Corollary \ref{cor:linear} using Proposition \ref{pro:linear_ineq}.

\begin{proof}[Proof of Corollary \ref{cor:linear}]
	First, let us show that we may assume that the vectors $v_1,\dots,v_N$ span $\R^{2n}$. Denote $V:=span\{v_1,\dots,v_N\}$, then it can be decomposed into a symplectically orthogonal direct sum of a symplectic vector space and an isotropic one: $V=V_S\oplus V_I$. Denoting by $P:V\rightarrow V_S$ the projection, both sides of (\ref{eq:linear2}) are invariant under the replacement $v_i\mapsto P v_i$ (since $\omega_0|_{V_I}=0$, and since $ V_I $ and $ V_S $ are symplectically orthogonal). Clearly the vectors $\{P v_i\}$ span the symplectic vector space $V_S$. Replacing $\{v_i\}$ with $\{Pv_i\}$ and $(\R^{2n},\omega_0)$ with $(V_S,\omega_0)$, and recalling that  $(V_S,\omega_0)\cong (\R^{2n'},\omega_0)$ for some $n'\leq n$, justifies the assumption that the vectors $\{v_i\}_{i=1}^N$ span $\R^{2n}$.
	
	Consider the map $\operatorname{Sp}(n)\rightarrow\R$, $S\mapsto \sum_{i=1}^N \|S  v_i\|$. This map is continuous with respect to the operator norm, and we claim that it admits a minimum on $\operatorname{Sp}(n)$, denoted $S_{min}$. To see this, take any $S\in\operatorname{Sp}(n)$ with $\|S\|_{op}\geq L$, then the maximal eigenvalue of $S^TS$ satisfies $\lambda\geq L^2$. Let $u\in\R^{2n}$ be a corresponding unit eigenvector of $S^TS$, namely $S^T Su=\lambda u$. Then for every $v\in\R^{2n}$, 
	\begin{equation*}
	\|Sv\|^2=\left<Sv,Sv\right> = \left<v,S^TSv\right> \geq \lambda\cdot\left<u,v\right>^2\geq L^2\cdot\left<u,v\right>^2.
	\end{equation*} 
	Therefore, $\sum_{i=1}^N\|S v_i\|\geq L\cdot \sum_i |\left<u,v_i\right>|\geq L\cdot a$, where  
	\begin{equation*}
	a:=\min_{\|w\|=1} \sum_{1\leq i\leq N} |\left<w,v_i\right>| >0
	\end{equation*}
	is independent of $S$ (note that $a>0$ due to our assumption, that $\{v_i\}_i$ span $\R^{2n}$). This yields a lower bound for the map $S\mapsto \sum_{i=1}^N \|S  v_i\|$ which grows with operator norm. Therefore, the map  $S\mapsto \sum_{i=1}^N \|S  v_i\|$ indeed admits a minimum. 
	The minimizing matrix $S_{min}$ is symplectic, and therefore both sides of (\ref{eq:linear2}) are invariant under composition with it. By replacing $\{v_i\}$ with $\{S_{min}v_i\}$ we may assume that $\sum_{i=1}^N \| v_i\|\leq \sum_{i=1}^N \|Sv_i\|$ for every $S\in \operatorname{Sp}(n)$. Applying Proposition~\ref{pro:linear_ineq} and recalling that $\sum_{i,j=1}^N |\omega_0(v_i, v_j)|\leq \sum_{i,j=1}^N \|v_i\|\cdot\| v_j\| = (\sum_{i=1}^N \|v_i\|)^2$ yields (\ref{eq:linear2}).
\end{proof}\

\begin{rem} \label{rem:linear}
Let $ V $ be a real vector space of dimension $ 2n $, endowed with a non-degenerate $ 2 $-form $ \omega $.
Since $ (V,\omega) $ is linearly isomorphic to $ (\mathbb{R}^{2n},\omega_0) $, Corollary \ref{cor:linear} implies that for any $ v_1, \ldots, v_N \in V $ we have 
	\begin{equation*}
	\max_{x,y\in [-1,1]^N}\omega \left(\sum_{i=1}^N x_i v_i, \sum_{j=1}^N y_j v_j \right)\geq c(n)\cdot \sum_{i,j=1}^N |\omega(v_i, v_j)|.
	\end{equation*}
\end{rem}

\begin{proof}[Proof of Lemma~\ref{lem:pb_vs_sum}]
	Let $(M^{2n},\omega)$  be  a closed symplectic manifold and let $\{f_i\}_{i\in I}\subset C^\infty(M)$ be a finite collection of smooth functions. Without loss of generality, assume $I=\{1,\dots, N\}$ for some $N\in\N$. Let $p\in M$ be any point.


	Applying Remark~\ref{rem:linear} to $ V = T_pM $ and $ v_i = X_{f_i}(p) $ (where $ X_{f_i} $ denotes the symplectic gradient of $ f_i $) yields
	\begin{eqnarray*}
		c(n)\cdot \sum_{i,j=1}^N |\{f_i,f_j\}|(p) &=& c(n) \cdot \sum_{i,j=1}^N |\omega(X_{f_i}(p),X_{f_j}(p))| \\
		&\leq&		\max_{x,y\in[-1,1]^N} \omega\Big(\sum_i x_i X_{f_i}(p), \sum_j y_j X_{f_j}(p)\Big)\\
		&=&		\max_{x,y\in[-1,1]^N} \Big\{\sum_i x_i {f_i}, \sum_j y_j {f_j}\Big\}(p)
	\end{eqnarray*}

If we take $ p \in M $ to be the point where $ \max_M \sum_{i,j=1}^N |\{f_i,f_j\}| $ is achieved, we conclude 
	\begin{eqnarray*}
		c(n)\cdot \max_M \sum_{i,j=1}^N |\{f_i,f_j\}| &=& c(n)\cdot \sum_{i,j=1}^N |\{f_i,f_j\}|(p) \\
		&\leq& \max_{x,y\in[-1,1]^N} \Big\{\sum_i x_i {f_i}, \sum_j y_j {f_j}\Big\}(p) \\
		&\leq& \max_{x,y\in[-1,1]^N} \Big\|\Big\{\sum_i x_i {f_i}, \sum_j y_j {f_j}\Big\}\Big\|.
	\end{eqnarray*}
\end{proof}

Before we prove Proposition~\ref{pro:linear_ineq}, let us present some notations.
Let $\{C_j\}_{j=1}^{m}$ be a collection of $m=m(n,\theta)$ cones of angle $\theta \in (0,\pi /4) $ that cover the space. Namely, there exist unit vectors $\{z_j\}$ such that 
$$
C_j:=\left\{u\in\R^{2n}:\frac{\left<u,z_j\right>}{\|u\|}\geq \cos\theta\right\},
$$ 
and $\cup_{j=1}^mC_j=\R^{2n}$. Below we refer to $z_j$ as the center of $C_j$.
Let $C\in \{C_j\}_{j=1}^{m}$ be a cone with maximal sum of norms, namely for all  $1\leq j\leq m$,
\begin{equation*}
\sum_{v_i\in C_j} \|v_i\| \leq \sum_{v_i\in C}\|v_i\|.
\end{equation*}

\begin{proof}[Proof of Proposition~\ref{pro:linear_ineq}]
	First, let us notice that it is enough to prove that there exists a constant $A(n,\theta)>0$ such that
	\begin{equation}\label{eq:bnd_w_norms_in_C}
	\left(\sum_{v_i\in C} \|v_i\|\right)^2 \leq A(n,\theta)\cdot \max_{x,y\in[-1,1]^N} \omega_0\left(\sum_{i=1}^N x_iv_i, \sum_{j=1}^N y_jv_j\right).
	\end{equation} 
	Indeed, by our choice of $C$ we have 
	\begin{eqnarray*}
		\left(\sum_{i=1}^N \|v_i\|\right)^2
		&\leq& m(n,\theta)^2 \cdot \left(\sum_{v_i\in C} \|v_i\|\right)^2\\
		&\overset{(\ref{eq:bnd_w_norms_in_C})}{\leq}&  m(n,\theta)^2 \cdot A(n,\theta)\cdot \max_{x,y\in[-1,1]} \omega_0\left(\sum_{i=1}^N x_iv_i, \sum_{j=1}^N y_jv_j\right).
	\end{eqnarray*}
	Setting $c:= (m^2\cdot A)^{-1}$ yields the proposition. Therefore, it remains to prove (\ref{eq:bnd_w_norms_in_C}).
	Set $v:=\left(\sum_{v_i\in C} v_i\right)/\|\sum_{v_i\in C} v_i\|$ and consider the orthogonal decomposition 
	$\R^{2n}=span(v)\oplus span(J_0 v)\oplus\big(span(v)\oplus span(J_0 v)\big)^\perp$. Here $ J_0 : \mathbb{R}^{2n} \rightarrow \mathbb{R}^{2n} $ is the linear map 
	satisfying $ \omega(\xi,\eta) = \left< \xi,J_0 \eta \right> $ for any $ \xi, \eta \in \mathbb{R}^{2n} $, where $ \left< \cdot,\cdot \right> $ is the scalar product on 
	$ \mathbb{R}^{2n} $ (thus, $ J_0 $ is the ``multiplication by the imaginary unit'', if we naturally identify $ \mathbb{R}^{2n} \cong \mathbb{C}^n $).

	Let $S\in \operatorname{Sp}(n)$ be the symplectic linear map defined by $S(av+bJ_0v+w) = \frac{1}{2}av+ {2}b J_0v+w$ for $a,b\in\R$ and 
	$w\in (span(v)\oplus span(J_0 v))^\perp$. Then for every $ u \in \mathbb{R}^{2n} $, looking at the decomposition $u=av+bJ_0v+w\in\R^{2n}$ as above, we get
	\begin{eqnarray*}
		\|Su\|-\|u\| &=& \frac{\|Su\|^2 -\|u\|^2}{\|Su\|+\|u\|} \leq \frac{\|Su\|^2 -\|u\|^2}{\|u\|}\\
		&=& \frac{a^2/4 + 4b^2 + \|w\|^2- a^2 - b^2 - \|w\|^2}{\|u\|}\\
		&\leq& \frac{3b^2}{\|u\|} \leq 3|b|=  {3|\omega_0(u,v)|}.
	\end{eqnarray*} 
	For $u\in C$, $a=\left<u,v\right>\geq \cos(2\theta)\cdot\|u\|$ and
	\begin{eqnarray*}
		\|Su\| &=& \sqrt{a^2/4 +4b^2 +\|w\|^2}\\
		&\leq&  \sqrt{\frac{1}{4}\cos(2\theta)^2+4\sin(2\theta)^2}\cdot{\|u\|}\\
		&\leq& \frac{2}{3} \|u\|,
	\end{eqnarray*} 
	where the last inequality holds when we take $\theta$ small enough (e.g. $\theta\leq\pi/30$). 
	Writing $v_j = a_j v+b_jJ_0v +w_j$, we have 
	\begin{eqnarray*}
		0 &\leq& \sum_{j=1}^N \|Sv_j\| -\|v_j\|\\ 
		&=& \sum_{v_j\in C} \|Sv_j\| -\|v_j\| + \sum_{v_j\notin C} \|Sv_j\| -\|v_j\| \\
		&\leq& -\frac{1}{3} \sum_{v_j\in C}\|v_j\| + 3\sum_{v_j\notin C} |\omega_0(v,v_j)|.
	\end{eqnarray*}
	We conclude that $$ \sum_{v_j\in C}\|v_j\| \leq 9\sum_{v_j\notin C} |\omega_0(v,v_j)| .$$
	Recall that $ v=\left(\sum_{v_i\in C} v_i\right)/\|\sum_{v_i\in C} v_i\|$.
	Setting $x_i=1$ if $v_i\in C$ and 0 otherwise, $y_j=\text{sign}\,\omega_0(v,v_j)$ if $v_j\notin C$ and 0 otherwise, we have
	\begin{equation*}
	\Big\|\sum_{v_i\in C} v_i\Big\|\cdot \left(\sum_{v_j\in C} \|v_j\|\right)  \leq 9\ \omega_0\left(\sum_{i=1}^N x_iv_i, \sum_{j=1}^N y_j v_j\right).
	\end{equation*}
	
	Since $C$ is a cone of angle $\theta$ with a center $ z $, for any $u \in C$ we have $\left<u, z\right> \geq \cos(\theta)\cdot \|u\|$. Therefore, 
	$$\Big\|\sum_{v_i\in C} v_i\Big\| \geq \left<\sum_{v_i\in C} v_i, z\right>  = \sum_{v_i\in C} \left<v_i, z\right> \geq  \cos(\theta) \cdot \sum_{v_j\in C} \|v_j\|.$$
	Combining the above inequalities we obtain
	\begin{equation*}
	\left(\sum_{v_j\in C} \|v_j\|\right)^2  \leq \frac{9}{\sqrt{\cos(\theta)}}\cdot\max_{x,y\in[-1,1]^N}\omega_0\left(\sum_{i=1}^N x_i v_i, \sum_{j=1}^N y_j v_j\right).
	\end{equation*}
This proves (\ref{eq:bnd_w_norms_in_C}) and hence the proposition.
\end{proof}	


\section{Proof of Lemma \ref{lem:pb_to_intersections}} \label{app:lemma-intgeom-proof}

	We prove the lemma in three steps. \vspace{2mm}
	
	\noindent \underline{Step 1:} We first show the statement of the lemma for measurable subsets $\Omega \subset \R^2 \setminus cv(\Phi)$. For doing that, it is enough to consider 
	the case when $ \Omega $ is compactly contained in $ \R^2 \setminus cv(\Phi) $. Indeed, any given $\Omega \subset \R^2 \setminus cv(\Phi)$ can 
	be exhausted by a non-decreasing sequence of measurable subsets compactly contained in $ \R^2 \setminus cv(\Phi)$. In 
	particular, $(\ref{eq:pb_to_intersections})$  holds for each subset from the sequence, and by passing to the limit, we conclude that $ \Omega $ 
	satisfies $(\ref{eq:pb_to_intersections})$ as well.
	
	Moreover, by partitioning $ \Omega $ into small pieces, we may assume that $ \Omega $ is a subset of a sufficiently small neighbourhood of a 
	given point $ z \in \R^2 \setminus cv(\Phi)$. Now let $ z \in \R^2 \setminus cv(\Phi)$, then $ z $ is a regular value of the map $ \Phi $, and hence for a small 
	neighbourhood $ U $ of $ z $, the preimage $ \Phi^{-1}(U) $ is a disjoint union $ \Phi^{-1}(U) = \cup_{k=1}^N V_k $ of open subsets $ V_k \subset M $, 
	where $ \Phi|_{V_k} : V_k \rightarrow U $ is a diffeomorphism for each $ k $. Note that $ N = K(s,t) $ for any $ (s,t) \in U $. As a result, for any 
	measurable $ \Omega \subset U $, we have 
	
	\begin{eqnarray*}
		\int_{\Phi^{-1}(\Omega)} |\{f,g\}|\, \omega 
		&=& \sum_{k=1}^N \int_{V_k \cap \Phi^{-1}(\Omega)} |df \wedge dg| \\	
		&=& \sum_{k=1}^N \int_{V_k} \id_{\Phi^{-1}(\Omega)} \cdot |df \wedge dg| \\
		&=& \sum_{k=1}^N \int_U \id_{\Omega}\ ds\, dt\\
		&=& N \cdot \int_\Omega\ ds\, dt\\
		&=& \int_{\Omega} K(s,t)\ ds\, dt.\\
	\end{eqnarray*} 
	

\noindent 	\underline{Step 2:} Assume that $\Omega\subset cv(\Phi)$, then by Sard's theorem, $\Omega$ is of measure zero, and hence the right hand side of (\ref{eq:pb_to_intersections}) vanishes. Denote by $Z\subset \Phi^{-1}(cv(\Phi))$ the set of regular points of $\Phi$ in $\Phi^{-1}(cv(\Phi))$. Since $\Phi$ restricts to a diffeomorphism on a neighborhood of each point in $Z$, and $\Phi(Z)\subset cv(\Phi)$ is of measure zero in $\R^2$, we conclude that $Z$ is of measure zero in $M$ (again, with respect to the volume density given by $\omega$). In addition, $|\{f,g\}|$ equals to the Jacobian of $\Phi$ and thus vanishes on $cp(\Phi)$. Therefore, for $\Phi^{-1}(\Omega) \subset \Phi^{-1}(cv(\Phi)) = Z\cup cp(\Phi)$, we have 
	\begin{equation*}
	\int_{\Phi^{-1}(\Omega)} |\{f,g\}|\ \omega \leq \int_Z |\{f,g\}|\ \omega + \int_{cp(\Phi)} |\{f,g\}|\ \omega = 0.
	\end{equation*}  
	We conclude that the left hand side of (\ref{eq:pb_to_intersections}) vanishes as well, which proves the claim for this case.\\ \\
	\underline{Step 3:} Assume $\Omega\subset \R^2$ is measurable, and consider the decomposition $\Omega = \big(\Omega\setminus cv(\Phi)\big)\cup \big(\Omega\cap cv(\Phi)\big)$. Then, by the previous steps, (\ref{eq:pb_to_intersections}) holds for both $\Omega\setminus cv(\Phi)$ and $\Omega\cap cv(\Phi)$, and therefore holds for their disjoint union as well.

\bibliographystyle{plain}
\bibliography{refs}
\vspace*{1cm}

\paragraph{Lev Buhovski,}$ $\\
School of Mathematical Sciences\\ 
Tel Aviv University \\
Ramat Aviv, Tel Aviv 69978\\
Israel\\
E-mail: levbuh@post.tau.ac.il

\paragraph{Alexander Logunov,}$ $\\ 
School of Mathematics \\ 
Institute for Advanced Study \\ 
Princeton, NJ 08540 \\ \\ 
School of Mathematical Sciences \\ 
Tel Aviv University \\ Tel Aviv 69978 \\ Israel \\ \\ Chebyshev Laboratory \\ St. Petersburg State University \\ 14th Line V.O., 29B, Saint Petersburg 199178 \\ Russia. \\ \\ 
E-mail: log239@yandex.ru

\paragraph{Shira Tanny,}$ $\\
School of Mathematical Sciences\\ 
Tel Aviv University \\
Ramat Aviv, Tel Aviv 69978\\
Israel\\
E-mail: tanny.shira@gmail.com

\end{document}